\newtheorem{theorem}{Theorem}
\newtheorem{lemma}{Lemma}
\newtheorem{proposition}{Proposition}
\theoremstyle{definition}
\newtheorem{definition}{Definition}
\newtheorem{remark}{Remark}
\theoremstyle{plain}
\newcommand{\otoprule}{\midrule[\heavyrulewidth]}
\newcommand{\vt}{\vspace{.1cm}}
\newcommand{\vtt}{\vspace{.2cm}}
\newcommand{\R}{\mathbb{R} }
\newcommand{\Q}{\mathbb{Q} }
\newcommand{\q}{\mathbb{Q}_{\epsilon}^n }
\newcommand{\Qe}{\mathbb{Q}_\epsilon^n}
\newcommand{\h}{\mathbb{H} }
\newcommand{\s}{\mathbb{S}}
\renewcommand{\rho}{\varrho}
\renewcommand{\theta}{\varTheta}
\renewcommand{\Theta}{\varTheta}
\renewcommand{\Lambda}{\varLambda}
\renewcommand{\Sigma}{\varSigma}
\renewcommand{\tau}{\uptau}
\newcommand{\wi}{I\times_\omega\Q_\epsilon^n}
\newcommand{\overbar}[1]{\mkern 1.5mu\overline{\mkern-1.5mu#1\mkern-1.5mu}\mkern 1.5mu}
\renewcommand{\theta}{\Theta}
\newcommand{\transv}{\mathrel{\text{\tpitchfork}}}
\newcommand{\tpitchfork}{%
  \vbox{
    \baselineskip\z@skip
    \lineskip-.52ex
    \lineskiplimit\maxdimen
    \m@th
    \ialign{##\crcr\hidewidth\smash{$-$}\hidewidth\crcr$\pitchfork$\crcr}
  }%
}
\begin{document}

\title[Einstein Hypersurfaces]
{Einstein Hypersurfaces  of  Warped \\ Product Spaces}
\author{R. F. de Lima, F.  Manfio \and J. P. dos Santos}
\address[A1]{Departamento de Matem\'atica - Universidade Federal do Rio Grande do Norte}
\email{ronaldo.freire@ufrn.br}
\address[A2]{ICMC–Universidade de São Paulo}
\email{manfio@icmc.usp.br}
\address[A3]{Departamento de Matem\'atica - Universidade de Brasília}
\email{joaopsantos@unb.br}
\subjclass[2010]{53B25 (primary), 53C25,  53C42 (secondary).}
\keywords{Einstein hypersurface--constant sectional curvature--warped product.}

\maketitle

\begin{abstract}
We consider Einstein hypersurfaces
of warped products $I\times_\omega\mathbb Q_\epsilon^n,$ where $I\subset\mathbb R$ is an open interval and
$\mathbb Q_\epsilon^n$ is the simply connected space form of dimension
$n\ge 2$ and constant sectional curvature $\epsilon\in\{-1,0,1\}.$
We show that, for all  $c\in\mathbb R$ (resp. $c>0$),  there exist rotational
hypersurfaces of constant sectional curvature $c$ in $I\times_\omega\mathbb H^n$ and
$I\times_\omega\mathbb R^n$
(resp. $I\times_\omega\mathbb S^n$), provided that $\omega$ is nonconstant.
We also show that the gradient $T$ of the  height function of any Einstein hypersurface
of $\wi$ (if nonzero)  is one of its principal directions. Then,
we consider a particular type of Einstein hypersurface
of $I\times_\omega\mathbb Q_\epsilon^n$ with non vanishing $T$ --- which we call ideal ---
and prove that, for $n>3,$ such a hypersurface $\Sigma$ has either precisely two or
precisely three distinct principal curvatures everywhere.
We show that, in the latter case, there exist such a $\Sigma$ for certain warping
functions $\omega,$ whereas in the  former case $\Sigma$
is necessarily of constant sectional curvature and rotational, regardless the warping function $\omega.$
We also characterize ideal Einstein hypersurfaces of $I\times_\omega\mathbb Q_\epsilon^n$ with
no vanishing angle function as local graphs on families of
isoparametric hypersurfaces of $\mathbb Q_\epsilon^n.$
\end{abstract}

\section{Introduction}
In this paper, we consider Einstein  hypersurfaces
of warped products $\wi,$ where $I\subset\R$ is an open interval, and
$\Qe$ stands for the simply connected space form of dimension
$n\ge 2$ and constant sectional curvature $\epsilon\in\{-1,0,1\}.$
Recall that a Riemannian manifold $(M,g)$ is called \emph{Einstein} if
$${\rm Ric}_M=\Lambda g,$$ where ${\rm Ric}_M$ is the Ricci tensor of $(M,g)$ and
$\Lambda$ is a constant. More specifically, we call such an $M$ a
$\Lambda$-Einstein manifold.

Riemannian manifolds of constant sectional curvature (CSC, for short)
are the simplest  examples of Einstein manifolds, which we  call \emph{trivial}.
It was shown by P. Ryan \cite{ryan} that,
for $\epsilon\le 0,$ any Einstein hypersurface
of $\mathbb Q_\epsilon^{n+1}$ is trivial, and also that there
exist nontrivial Einstein hypersurfaces in $\s^{n+1}.$
Inspired by this result, the main question we address here is the following:
\[
\text{\emph{Under which conditions an Einstein hypersurface of $\wi$ is necessarily trivial}?}
\]

For $\omega$  constant and $\epsilon\ne 0,$ it was proved in \cite{lps} that any Einstein hypersurface of
$\R\times_\omega\Qe$   is trivial.
On the other hand, this is not true for certain nonconstant warping functions $\omega.$
Indeed, writing $\s^2(1/\sqrt 2)$ for the $2$-sphere of $\R^3$ of radius $1/\sqrt 2,$
it is known that the product $\s^2(1/\sqrt 2)\times\s^2(1/\sqrt 2)$
--- endowed with its canonical Riemannian metric ---
is an Einstein manifold of nonconstant sectional curvature which
is naturally embedded in $\s^{5}-\{p,-p\}=(0,\pi)\times_{\sin t}\s^4,$
where $p$ is a suitable point of the unit sphere $\s^{5}\subset\R^6.$

It should be noticed that, in answering the above question, one may
conclude that certain manifolds cannot be isometrically immersed
in $\wi.$
For instance, from the aforementioned result in \cite{lps},
there is no isometric immersion
of the product
$\s^2(1/\sqrt 2)\times\s^2(1/\sqrt 2)$  into  $\R\times \Q_\epsilon^4,$
$\epsilon\ne 0.$

A major property of Einstein hypersurfaces of $\wi$
we establish here is that they have the gradient $T$
of their height functions (when nonzero)  as principal directions.
Hypersurfaces satisfying this condition, which we call the $T$-\emph{property}, have been considered in
several works on  constant curvature (mean and sectional)
hypersurfaces of  product spaces $\R\times M$, $M$ a Riemannian manifold
(see, e.g., \cite{dillenetal, delima-roitman, lps,
manfio-tojeiro, manfio-tojeiro-veken, tojeiro}).
We add that, in product spaces $\wi,$ the class of hypersurfaces
with the $T$-property is abundant and includes all rotational ones.

Considering our purposes here,  a primary concern
should be to prove existence of trivial Einstein hypersurfaces in
$\wi.$ This is accomplished in our first result, as stated below.
It should be mentioned that  we conceive rotational hypersurfaces
as in \cite{dillenetal}, dividing them into three types:
spherical, parabolic, and hyperbolic (see Section \ref{sec-csc} for details).

\begin{theorem} \label{th-existenceCSC01}
Given an open interval $I\subset\R,$ let $\omega:I\rightarrow \omega(I)$ be a diffeomorphism
such that $|\omega'|>1$ on $I.$ Then, the following assertions
hold:
\begin{itemize}[parsep=1ex]
  \item[\rm i)] For all $c\in\R,$
  there  exists a rotational spherical-type hypersurface
of constant sectional curvature $c$  in $I\times_\omega\q,$ where $\epsilon\in\{0,-1\}$.
  \item[\rm ii)]  For all $c>0,$ there  exists a rotational spherical-type hypersurface
of constant sectional curvature $c$  in $I\times_\omega\s^n.$
\item[\rm iii)]  For all $c<0,$ there exist in $I\times_\omega\h^n$
a rotational parabolic-type hypersurface and a rotational
hyperbolic-type hypersurface which are both of constant sectional curvature $c.$
  \item[\rm iv)]  There exists a flat rotational parabolic-type  hypersurface   in
$I\times_\omega\h^n.$
\end{itemize}
\end{theorem}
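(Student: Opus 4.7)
Our approach is to construct each rotational hypersurface from a profile curve and translate the constant-sectional-curvature condition into an ODE whose solvability is ensured by the hypothesis on $\omega.$

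For the spherical-type hypersurfaces of items (i) and (ii), fix $p_0\in\Qe$ and look for $\Sigma$ as the image of
\[
\phi(s,\xi)=\bigl(t(s),\exp_{p_0}(r(s)\xi)\bigr),\qquad \xi\in\s^{n-1}\subset T_{p_0}\Qe,
\]
for a profile curve $(t(s),r(s))$ with $r>0,$ chosen to be unit-speed, i.e.\ $t'(s)^2+\omega(t(s))^2 r'(s)^2=1.$ A direct computation shows that the induced metric has the warped form $g_\Sigma=ds^2+f(s)^2 g_{\s^{n-1}},$ with $f(s):=\omega(t(s))\,s_\epsilon(r(s)),$ where $s_\epsilon$ denotes $\sinh,\,\mathrm{id},\,\sin$ according to $\epsilon=-1,0,1.$ Such a warped metric has constant sectional curvature $c$ precisely when $f''+cf=0,$ which upon one integration (with the initial condition dictated by the spherical fiber) becomes the first integral $(f')^2+cf^2=1.$

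Using that $\omega\colon I\to\omega(I)$ is a diffeomorphism, we reparametrize the profile via $u=\omega(t),$ writing $t=\omega^{-1}(u)$ and viewing $r$ as a function of $u.$ The first integral, combined with the unit-speed constraint, then yields explicit formulas for $t'(u)$ and $r'(u)$ involving a square root; the hypothesis $|\omega'|>1$ is precisely what is needed to keep the radicand positive, while the further positivity conditions $1-cf^2\ge 0$ and $c_\epsilon(r)^2=1-\epsilon s_\epsilon(r)^2>0$ impose the sign restriction on $c$ in item~(ii). The resulting first-order ODE admits a local smooth solution on a nonempty subinterval of $\omega(I),$ producing the desired $\Sigma.$

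For the parabolic and hyperbolic types in items (iii) and (iv), the same scheme is carried out using parametrizations of $\h^n$ adapted to horospheres and to equidistant hypersurfaces of a totally geodesic $(n-1)$-plane, respectively. In each case the rotational orbit is a leaf of a canonical foliation of $\h^n$ by $(n-1)$-dimensional homogeneous submanifolds, and the induced metric retains the warped form $ds^2+f(s)^2 g_0$ with $g_0$ of constant curvature $0$ or $-1.$ The CSC equation again reduces to $f''+cf=0$ with an appropriately modified first integral, which we then solve exactly as in the spherical case. The main obstacle is to verify, uniformly across the four cases, that the radicand stays positive on a nonempty subinterval of $\omega(I);$ this is exactly where the strict inequality $|\omega'|>1$ is essential, and where the sign of $c$ must be constrained in items (ii)--(iv) to avoid empty or degenerate solution sets.
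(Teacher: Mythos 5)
Your overall strategy coincides with the paper's: parametrize the rotational hypersurface by a profile curve subject to the unit-speed constraint, observe that the induced metric is the warped metric $ds^2+(\omega(\xi)f(\phi))^2du^2$, so that constant sectional curvature $c$ amounts to prescribing $\omega(\xi)f(\phi)=\psi$ with $\psi$ an explicit solution of the first integral $(\psi')^2+c\psi^2=\kappa_0$ ($\kappa_0$ the curvature of the rotational fiber), then solve this prescription for $\xi$ using $\chi=\omega^{-1}$ and reduce everything to a first-order ODE for the remaining profile function. Up to the immaterial choice of $\sin r$ versus $\cos r$ in the spherical case over $\s^n$, this is exactly the paper's reduction (its Proposition 2 and system \eqref{eq-system}), and your identification of $|\omega'|>1$, i.e. $|\chi'|<1$, as the source of solvability is also the paper's.

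The gap is at the decisive step, which you defer rather than carry out. After substituting $\xi=\chi(\psi/f(\phi))$ into the unit-speed constraint one does not obtain ``explicit formulas involving a square root'' for the derivatives; one obtains a quadratic equation $a_2(s,y)(y')^2+a_1(s,y)y'+a_0(s,y)=0$ in $y'=\phi'$, and local solvability via Picard requires the discriminant $a_1^2-4a_2a_0$ to be positive at some admissible initial point $(s_0,y_0)$ --- admissible meaning also that $\psi(s_0)/f(y_0)$ lies in $\omega(I)$ so that $\chi$ may be applied at all. The hypothesis $|\omega'|>1$ alone is not ``precisely what is needed'': it yields $|\chi'|<1$, but positivity of the discriminant additionally requires a careful choice of $(s_0,y_0)$ in each of the paper's five cases (e.g.\ $\sinh y_0>1$ in the spherical case over $\h^n$, $y_0<\sqrt{c}\,s_0$ in the spherical case over $\s^n$, $y_0$ large in the parabolic and hyperbolic cases), which is what forces $a_0(s_0,y_0)<0$ and hence $\Delta(s_0,y_0)>0$. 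This case-by-case verification is essentially the entire content of the paper's proof beyond the setup, so asserting it ``uniformly across the four cases'' without computation leaves the argument incomplete at its crux, even though the plan would succeed if executed.
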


We remark that  Theorem \ref{th-existenceCSC01}
is of local nature. So,
there is no loss of generality in assuming the (nonconstant)
warping function $\omega$ to be
a diffeomorphism over its image.
By the same token,
we can  assume that, on $I,$
$|\omega'|$ is bounded below by a positive constant.
Hence, up to scaling, one has $|\omega'|>1.$
Considering these facts, Theorem \ref{th-existenceCSC01}  shows that,
for virtually any nonconstant warping function $\omega,$
the class of local rotational CSC hypersurfaces of $\wi$ is abundant.

To each warped product $\wi,$ we  associate  two real functions $\alpha$ and $\beta$
defined on $I,$ which arise from the expression of the curvature tensor of $\wi$ in terms of the curvature
tensor of $\Qe$ and the unit  tangent $\partial_t$ to the horizontal factor $I\subset\R.$
Then, assuming $n>3,$ we   show  that
a CSC hypersurface $\Sigma$ of $\wi$ is essentially rotational.
More precisely, we have the following result
($\xi$ always denotes the height function of the hypersurface).

\begin{theorem} \label{th-CSC}
Let $\Sigma\subset I\times_\omega \Q_\epsilon^n$ $(n>3)$ be a hypersurface with constant sectional
curvature  on which  $(\beta\circ\xi)T$ never vanishes.
Then, $\Sigma$ is rotational.
\end{theorem}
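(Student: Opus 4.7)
\emph{Proof plan.} The strategy has three parts: invoke the $T$-property to diagonalize the shape operator $A$ along $T/|T|$; use the Gauss equation with $n-1\ge 3$ to force $A|_{T^\perp}$ to be a scalar operator; and finally, combine the Codazzi equation (via the hypothesis $\beta\circ\xi\ne 0$) with the orbital characterization of rotational hypersurfaces in $\wi$ to conclude.

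Any CSC manifold is automatically Einstein, and $T\ne 0$ by hypothesis, so the $T$-property stated in the introduction gives that $T/|T|$ is a principal direction of $A$, with some principal curvature $\lambda$; it remains to analyze the symmetric operator $A|_{T^\perp}$ on the $(n-1)$-dimensional distribution $T^\perp\subset T\Sigma$. Fix an orthonormal basis $\{X_i\}_{i=1}^{n-1}$ of $T^\perp$ diagonalizing $A|_{T^\perp}$ with eigenvalues $\mu_i$. Because $T$ is the tangential component of $\partial_t$, each $X_i$ is orthogonal to $\partial_t$ in $\wi$, and consequently the ambient sectional curvature of the plane spanned by any two of them depends only on $\xi$ and equals $\alpha(\xi)$. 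The Gauss equation for the hypersurface $\Sigma$ of constant sectional curvature $c$ then reads
\[
c \;=\; \alpha(\xi) + \mu_i\mu_j \qquad (i\ne j).
\]
Since $n-1\ge 3$, evaluating this on three distinct indices forces either all $\mu_i$ to coincide, or else $\alpha(\xi)\equiv c$ and at most one of the $\mu_i$ is nonzero. The latter degenerate alternative is ruled out by the Codazzi equation, whose relevant ambient term is essentially $\beta(\xi)$ times the angle function times a tangential contribution, hence nontrivial by hypothesis. Thus $A|_{T^\perp}=\mu\,\mathrm{Id}$ for a smooth function $\mu$ on $\Sigma$, and $\Sigma$ has exactly two principal curvatures $\lambda,\mu$ of multiplicities $1$ and $n-1$.

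With this shape-operator structure, a second Codazzi computation --- now applied to $T/|T|$ against $X\in T^\perp$ --- shows that $\mu$ is constant on each integral leaf of $T^\perp$. Consequently, each such leaf is a totally umbilical submanifold of the slice $\{\xi=\text{const}\}\subset\wi$ with constant principal curvature, and its projection to $\Q_\epsilon^n$ is therefore an open piece of a geodesic sphere, a horosphere, or an equidistant hypersurface. These are exactly the orbital foliations arising, respectively, from the spherical-, parabolic-, and hyperbolic-type rotations on $\wi$, as in the constructions of \cite{dillenetal} underlying Theorem~\ref{th-existenceCSC01}. A standard characterization of rotational hypersurfaces via such orbital foliations, combined with the $T$-property, then identifies $\Sigma$ as rotational of the corresponding type. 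The main obstacle is the Gauss-equation dichotomy: excluding the branch $\alpha(\xi)\equiv c$, $\mathrm{rank}(A|_{T^\perp})\le 1$, is exactly the point at which $\beta\circ\xi\ne 0$ is used, and extracting the right Codazzi identity there is the decisive computational step; everything else reduces to standard warped-product Gauss--Codazzi manipulations and the known orbital geometry of rotational hypersurfaces.
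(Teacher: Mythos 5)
Your skeleton matches the paper's: CSC implies Einstein, so Lemma~\ref{lem-T} gives the $T$-property; the Gauss equation on pairs of vertical principal directions with $n-1\ge 3$ forces $A|_{T^\perp}$ to be scalar up to a degenerate branch; and umbilical vertical sections plus the $T$-property yield rotationality (this last step is exactly Proposition~\ref{prop-TandRotational}, via Lemma~\ref{lem-verticalsection}; your detour through geodesic spheres/horospheres/equidistants is a harmless elaboration of the same fact). The problem is the step you yourself single out as decisive: excluding the branch where $\alpha(\xi)+c=0$ and at most one vertical principal curvature is nonzero. You propose to kill it with the Codazzi equation, asserting that its ambient term is ``$\beta(\xi)$ times the angle function times a tangential contribution, hence nontrivial by hypothesis.'' This does not work as stated: the normal component of $\overbar R(X,Y)Z$ computed from \eqref{eq-barcurvaturetensor} carries a factor of $\theta=\langle N,\partial_t\rangle$, and the hypothesis $(\beta\circ\xi)T\ne 0$ only guarantees $|\theta|<1$, not $\theta\ne 0$ (vertical cylinders have $\theta\equiv 0$ with $T=\partial_t\ne 0$). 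So the quantity you claim is ``nontrivial by hypothesis'' can vanish identically, and in any case you never exhibit the Codazzi identity that would produce a contradiction.

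The paper's mechanism is different and does not involve Codazzi at all: it uses the \emph{mixed} Gauss equation, pairing $X_1=T/\|T\|$ with a vertical principal direction, which reads $(\alpha\circ\xi)+c=\lambda_1\lambda_i-(\beta\circ\xi)\|T\|^2$. In the degenerate branch one picks an index $i\ge 2$ with $\lambda_i=0$ (possible since $n-1\ge 3$) and immediately gets $(\beta\circ\xi)\|T\|^2=0$, contradicting the hypothesis --- note that the controlling quantity is $\beta\|T\|^2$, which the hypothesis does govern, rather than $\theta\beta$, which it does not. Your proposal omits this mixed Gauss identity entirely, and it is the only place in the argument where $\beta$ enters with the correct factor. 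To repair your proof, replace the Codazzi appeal by this second Gauss identity; the rest of your outline then goes through. (Minor point: with the paper's curvature conventions the vertical Gauss identity is $(\alpha\circ\xi)+c=\lambda_i\lambda_j$, not $c=\alpha(\xi)+\mu_i\mu_j$; the sign of $\alpha$ is immaterial to the dichotomy argument.)
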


Next, we turn our attention to Einstein hypersurfaces of $\wi$
focusing on our main question.
Firstly, as stated in the next two theorems, we characterize  them
according to whether  $(\beta\circ\xi)T$ vanishes everywhere or nowhere on
the hypersurface.

\begin{theorem} \label{th-betazero}
For $n>3,$ let $\Sigma$ be a connected $\Lambda$-Einstein hypersurface of $\wi$ such
that $(\beta\circ\xi)T$ vanishes on $\Sigma.$
Then,  $\alpha\circ\xi$ is  constant. Furthermore, defining
\[
\sigma:=\Lambda+(n-1)(\alpha\circ\xi),
\]
the following assertions hold:
\begin{itemize}[parsep=1ex]
  \item[\rm i)] If $\sigma>0,$  $\Sigma$ is trivial and totally umbilical (with constant umbilical function).
   \item[\rm ii)] If $\sigma=0,$  $\Sigma$ is trivial and its shape operator has rank at most one.
  \item[\rm iii)]If $\sigma<0,$  $\Sigma$ is nontrivial  and has  precisely
  two distinct nonzero constant principal curvatures with opposite signs, both with constant
  multiplicity  $\ge 2.$ In particular, $\Sigma$ has constant mean curvature.
\end{itemize}
\end{theorem}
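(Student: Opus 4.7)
The approach combines the Gauss equation with the explicit form of the Ricci tensor of $\wi$ in terms of $\alpha$ and $\beta$ together with the $T$-property already established for Einstein hypersurfaces. From the standard warped-product curvature formulas, $\overline{\mathrm{Ric}}$ decomposes as a multiple of the ambient metric plus a multiple of $\partial_t^\flat\otimes\partial_t^\flat$, with the second coefficient proportional to $\beta$. Writing $\partial_t = T + \theta\eta$ along $\Sigma$ (with $\eta$ a unit normal and $\theta$ the angle function) and substituting into the Gauss equation
\[
\mathrm{Ric}_\Sigma(X,Y) = \overline{\mathrm{Ric}}(X,Y) - \overline{R}(\eta,X,Y,\eta) + nH\langle SX,Y\rangle - \langle SX,SY\rangle,
\]
the Einstein condition $\mathrm{Ric}_\Sigma=\Lambda g$ becomes a tensor identity involving $g$, $T\otimes T$, $S$, and $S^2$. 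The hypothesis $(\beta\circ\xi)T\equiv 0$ splits the analysis into the open set $U=\{T\neq 0\}$, on which $\beta\circ\xi=0$, and the interior of the closed set $\{T=0\}$.

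On $U$ the $\beta$-contributions to both $\overline{\mathrm{Ric}}$ and $\overline{R}(\eta,\cdot,\cdot,\eta)$ vanish, and the $T$-property supplies an orthonormal eigenbasis $\{T/\|T\|,X_2,\dots,X_n\}$ of $S$ with eigenvalues $\lambda,\mu_2,\dots,\mu_n$. Testing the reduced Einstein identity on each eigenvector shows that \emph{every} principal curvature $\kappa$ satisfies the same quadratic
\[
\kappa^2-(nH)\kappa+\sigma=0, \qquad \sigma:=\Lambda+(n-1)(\alpha\circ\xi),
\]
so $S$ admits at most two distinct eigenvalues $\kappa_\pm$ with $\kappa_++\kappa_-=nH$ and $\kappa_+\kappa_-=\sigma$. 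Codazzi in the $\beta\!=\!0$ ambient then forces $H$ to be locally constant, whence $\sigma$, and hence $\alpha\circ\xi$, is constant on $U$. The interior of $\{T=0\}$, when nonempty, is handled separately: there $\xi$ is locally constant, so $\Sigma$ lies in a slice $\{t_0\}\times\Qe$, which is intrinsically CSC and umbilical in $\wi$ with $\alpha\circ\xi=\alpha(t_0)$ trivially constant; connectedness then globalizes the trichotomy.

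The three assertions emerge from the sign of $\sigma$. If $\sigma=0$, the roots of the quadratic are $0$ and $nH$; the trace identity $\sum\kappa_i=nH$ forces exactly one eigenvalue to equal $nH$, so $\mathrm{rank}\,S\le 1$, and the Gauss equation then delivers intrinsic CSC, giving (ii). If $\sigma>0$, both roots are nonzero and of the same sign; here I would invoke the warped-product Codazzi equations together with $n>3$ to exclude a genuine two-eigenvalue configuration, forcing $\Sigma$ to be umbilical with constant umbilical function $nH/2$, hence trivial, giving (i). If $\sigma<0$, the two roots are nonzero with opposite signs and constant mean curvature is immediate; the multiplicity lower bounds $\ge 2$ follow because a multiplicity-one principal direction, together with Codazzi in the warped ambient and the constancy of $\kappa_\pm$, is incompatible with $n>3$. \textbf{The main obstacle} is exactly this Codazzi-based discrimination between cases (i) and (iii): it is precisely where the hypothesis $n>3$ is essential, and it requires a careful analysis of the derivatives of the eigenvalue functions along the principal distributions of $S$.
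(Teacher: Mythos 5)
Your skeleton is the right one: once the $\beta$-terms drop out of the ambient curvature, equation \eqref{eq-ricci02} shows that every principal curvature satisfies $\kappa^2-H\kappa+\sigma=0$, so there are at most two eigenvalues $\lambda,\mu$ with $\lambda+\mu=H$, $\lambda\mu=\sigma$, and the theorem reduces to a sign analysis of $\sigma$. But two load-bearing steps are genuinely missing or wrong. First, your derivation of the constancy of $\alpha\circ\xi$ rests on the claim that ``Codazzi forces $H$ to be locally constant,'' which is unproven and in fact false: take $\omega\equiv 1$, $\epsilon=0$, and $\Sigma=\gamma\times\R^{n-1}\subset\R\times\R^n$ with $\gamma$ a plane curve of nonconstant curvature; this is a flat (hence $0$-Einstein) hypersurface with $(\beta\circ\xi)T\equiv 0$, falling under case (ii), whose mean curvature is nonconstant. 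The correct argument needs no Codazzi equation and no splitting into $\{T\neq 0\}$ and $\{T=0\}$: by \eqref{eq-alphaprime}, $\nabla(\alpha\circ\xi)=(\alpha'\circ\xi)\,T=\tfrac{2\omega'}{\omega}(\beta\circ\xi)\,T=0$, and connectedness does the rest. Relatedly, you invoke the $T$-property, but Lemma \ref{lem-T} supplies it only when $(\beta\circ\xi)T$ is \emph{nowhere} zero --- the opposite of the present hypothesis; fortunately you do not need it, since the quadratic holds when tested on any orthonormal principal frame.

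Second, the discrimination between cases (i) and (iii), which you defer to a ``Codazzi-based'' analysis of eigenvalue derivatives and flag as the main obstacle, is purely algebraic. Writing $k$ for the multiplicity of $\lambda$, comparing $\lambda+\mu=H$ with $H=k\lambda+(n-k)\mu$ gives
\[
(k-1)\lambda+(n-k-1)\mu=0.
\]
If $\sigma>0$ and $\lambda\neq\mu$ somewhere, both roots are nonzero of the same sign, so both coefficients must vanish, forcing $n=2$; hence $\Sigma$ is umbilical with $\lambda^2=\sigma$ constant, and Gauss gives triviality. If $\sigma<0$, the roots are nonzero of opposite signs, so the same identity forces $2\le k\le n-2$ (both multiplicities $\ge 2$), and then $\mu^2=-\tfrac{k-1}{n-k-1}\,\sigma$ is constant, so $\lambda$, $\mu$, and $H$ are constant --- which is also where your unproved assertions of constant mean curvature and of the multiplicity bounds actually come from. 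Finally, you omit the proof of nontriviality in case (iii); it follows from the Gauss equation once $\lambda$ and $\mu$ are known to be constant and distinct, by comparing the sectional curvatures of planes spanned by principal directions in the two eigendistributions.
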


Connected Einstein hypersurfaces on which $(\beta\circ\xi)T$ never vanishes
have many interesting properties. For this reason, we shall call them \emph{ideal.}
The following result supports  our claim.

\begin{theorem}  \label{th-einstein}
For $n>3,$ let $\Sigma$ be an ideal Einstein hypersurface of $\wi.$
Then, $\Sigma$ has the $T$-property. In addition,
the following assertions are equivalent:
\begin{itemize}[parsep=1ex]
  \item[\rm i)] $\Sigma$ is trivial.
  \item[\rm ii)] Any vertical section $\Sigma_t=\Sigma\transv(\{t\}\times\Q_\epsilon^n)$ is
  totally umbilical.
  \item[\rm iii)] $\Sigma$ is rotational.
\end{itemize}
\end{theorem}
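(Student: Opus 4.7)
The plan is to establish the $T$-property first and then prove the equivalences cyclically in the order (i) $\Rightarrow$ (iii) $\Rightarrow$ (ii) $\Rightarrow$ (i). The $T$-property is inherited from the general fact (announced in the abstract and to be proved earlier in the paper) that on any Einstein hypersurface of $\wi$ the gradient $T$ of the height function is a principal direction wherever it does not vanish. Since $\Sigma$ is assumed ideal, $(\beta\circ\xi)T$ never vanishes on $\Sigma$; in particular $T\neq 0$ everywhere, so the $T$-property holds throughout $\Sigma$.

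The implication (i) $\Rightarrow$ (iii) follows immediately from Theorem \ref{th-CSC}, whose hypotheses are exactly that $\Sigma$ be a hypersurface with constant sectional curvature on which $(\beta\circ\xi)T$ is nowhere zero. For (iii) $\Rightarrow$ (ii), I would appeal to the description of rotational hypersurfaces recalled in Section \ref{sec-csc}: each vertical section $\Sigma_t$ is an orbit of the relevant isometry group acting on the slice $\{t\}\times\Q_\epsilon^n$, and such an orbit is a geodesic sphere, a horosphere, or an equidistant hypersurface of $\Q_\epsilon^n$ (according as the type is spherical, parabolic, or hyperbolic), and so is totally umbilical.

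The main obstacle is (ii) $\Rightarrow$ (i). Using the $T$-property, I would write the shape operator $A$ of $\Sigma$ as the eigenvalue $\lambda$ along the line spanned by $T$ together with its restriction $A_0 := A|_{T^\perp}$, and observe that $T^\perp$ is precisely the tangent space of the vertical section $\Sigma_t$ at each point. A standard computation in the warped product $\wi$ shows that $A_0$ and the shape operator of $\Sigma_t$ inside $\Q_\epsilon^n$ differ pointwise by a scalar multiple of the identity. Hence the total umbilicity of $\Sigma_t$ in $\Q_\epsilon^n$ forces $A_0 = \mu\,\mathrm{Id}$ for some function $\mu$ on $\Sigma$, so that $\Sigma$ has at most two distinct principal curvatures everywhere, with $\mu$ of multiplicity $n-1\ge 3$. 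Feeding this eigenvalue structure into the Gauss equation for $\Sigma \subset \wi$ together with the Einstein identity ${\rm Ric}_\Sigma = \Lambda g$ yields algebraic relations among $\lambda$, $\mu$, $\alpha\circ\xi$ and $\beta\circ\xi$; the genuinely delicate step, and the point where the hypothesis $n>3$ enters, is to manipulate these identities to conclude that the sectional curvatures $K_\Sigma(T,Y)$ and $K_\Sigma(Y_1,Y_2)$ coincide for all $Y, Y_1, Y_2 \in T^\perp$, giving that $\Sigma$ has constant sectional curvature.
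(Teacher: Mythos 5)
Your architecture coincides with the paper's on three of the four implications: the $T$-property is Lemma \ref{lem-T}, and (i) $\Rightarrow$ (iii) is Theorem \ref{th-CSC}, exactly as you say. For (ii) $\Leftrightarrow$ (iii) the paper invokes Proposition \ref{prop-TandRotational} (both directions at once for hypersurfaces with the $T$-property); your (iii) $\Rightarrow$ (ii) via the definition of rotational hypersurfaces is fine and, since your cycle is (i) $\Rightarrow$ (iii) $\Rightarrow$ (ii) $\Rightarrow$ (i), it does deliver the full equivalence.

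The real divergence is (ii) $\Rightarrow$ (i), and here your proposal is a genuinely different route from the paper's but stops short of the decisive step. The paper argues non-computationally: total umbilicity of the vertical sections plus Lemma \ref{lem-verticalsection} gives $\Sigma$ a principal curvature of multiplicity $n-1$; since $\wi$ is conformally flat and $n>3$, the classical criterion for hypersurfaces of conformally flat manifolds makes $\Sigma$ conformally flat, and a conformally flat Einstein manifold has constant sectional curvature. Your route --- feed $A=\mathrm{diag}(\lambda_1,\mu,\dots,\mu)$ into the Gauss equation and the Einstein identities --- does work, but you only assert that the identities ``can be manipulated'' to equalize the sectional curvatures; that is the whole content of the implication and must be carried out. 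It closes cleanly: with $H=\lambda_1+(n-1)\mu$, subtracting the two equations of Lemma \ref{lem-T-einstein} gives $(n-2)\bigl(\mu^{2}-\lambda_1\mu+(\beta\circ\xi)\|T\|^{2}\bigr)=0$, i.e. $\lambda_1\mu-(\beta\circ\xi)\|T\|^{2}=\mu^{2}$, which by \eqref{eq-gauss02} says precisely that the $\beta$-type and rank-one parts of the curvature tensor of $\Sigma$ cancel, so that $R$ has the constant-curvature form with curvature $\mu^{2}-\alpha\circ\xi$ pointwise; Schur's lemma then yields (i). Two caveats on your sketch as written: equality of $K_\Sigma(T,Y)$ and $K_\Sigma(Y_1,Y_2)$ on planes spanned by principal directions is not by itself constant sectional curvature --- you need the full tensorial identity (or pointwise constancy on all $2$-planes followed by Schur); and this computation uses only $n>2$, so $n>3$ is not ``where the hypothesis enters'' in this implication --- in the paper it is what licenses the conformal-flatness criterion here, and it is genuinely needed in Theorem \ref{th-CSC} for (i) $\Rightarrow$ (iii).
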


The second fundamental property of ideal Einstein hypersurfaces
of $\wi$ we establish here is that they are
local horizontal graphs of  functions $\phi=\phi(s)$
whose level hypersurfaces $f_s$ are parallel and isoparametric (i.e, have constant
principal curvatures) in $\q.$ We call such a hypersurface a $(\phi,f_s)$-\emph{graph}.

\begin{theorem}  \label{L1-grapheinstein}
For $n>3,$ let $\Sigma$  be an  oriented ideal Einstein hypersurface of $\wi$ with unit normal
$N$  and non vanishing angle function $\theta:=\langle N,\partial_t\rangle.$
Under these conditions, the following hold:
\begin{itemize}[parsep=1ex]
  \item [\rm i)] $\Sigma$ is locally a $(\phi,f_s)$-graph in $\wi.$
  \item [\rm ii)] The parallel family $\{f_s\}$ of any local $(\phi,f_s)$-graph of $\Sigma$
   is isoparametric, and each $f_s$ has at most two distinct principal curvatures.
  \item [\rm iii)] $\Sigma$ is trivial, provided that the principal curvature associated to
  $T$ vanishes everywhere on $\Sigma.$
\end{itemize}
\end{theorem}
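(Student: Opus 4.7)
The plan rests on two facts already established in the paper for ideal Einstein hypersurfaces of $\wi$: the $T$-property (Theorem \ref{th-einstein}) and the bound of at most three distinct principal curvatures. For part (i), since $\theta$ never vanishes, the projection $(t,x)\mapsto x$ is a local diffeomorphism from $\Sigma$ to $\Qe$, so locally $\Sigma$ is the graph $\{(\phi(x),x):x\in U\}$ of a smooth function $\phi$; set $f_s:=\phi^{-1}(s)$. I would then express $T$ in terms of $\phi$ to show that its horizontal projection is a positive multiple of $\nabla^{\Qe}\phi$, so each level set $f_s$ is everywhere orthogonal to this projection. Using the $T$-property $AT=\lambda T$ together with the Weingarten and Koszul formulas in the warped metric, I expect to conclude that $\lambda$, $|T|$ and $\theta$ depend only on the value of $\xi$; this forces $|\nabla^{\Qe}\phi|$ to be constant along each level set, which is the classical characterization that $\{f_s\}$ is a parallel family in $\Qe$.

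For part (ii), the orthogonal complement $T^\perp\subset T\Sigma$ coincides pointwise with the tangent space to the vertical section $\Sigma_t$, which projects conformally (by a factor involving $\omega$ and $|\nabla^{\Qe}\phi|$) onto the tangent space of $f_s$ in $\Qe$. Since $A$ leaves $T^\perp$ invariant with at most two eigenvalues there, the same is true for the shape operator of $f_s$ in $\Qe$. To upgrade this to the isoparametric property --- constancy of the principal curvatures along each $f_s$ --- I would use the Codazzi equation of $\Sigma$ combined with the observation that the Einstein condition, via the Gauss equation expressed through the functions $\alpha$ and $\beta$, determines each principal curvature of $\Sigma$ as a function of $\xi$ alone.

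For part (iii), the vanishing of the principal curvature associated to $T$ forces the shape operator of $\Sigma$ to decompose as $0\oplus A|_{T^\perp}$. Substituting this block form into the Gauss equation and imposing $\mathrm{Ric}_\Sigma=\Lambda g$, the algebra should collapse the two remaining eigenvalues of $A$ to a single value; the sectional curvatures of $\Sigma$ --- computed separately for 2-planes containing $T$ and for 2-planes in $T^\perp$ --- should then coincide, yielding that $\Sigma$ has constant sectional curvature and is therefore trivial.

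The hardest step will be the parallelism claim in part (i), which demands showing that the horizontal projections of the integral curves of $T$ are geodesics of $\Qe$; this requires a careful interplay between the $T$-property, the Koszul formula for the warped metric, and the nonvanishing of $\theta$. Once this is in place, parts (ii) and (iii) reduce to algebraic manipulations with the shape operator and the Gauss--Codazzi equations.
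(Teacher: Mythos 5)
There is a genuine gap, and it sits in part (iii). You assert that after imposing $\lambda_1\equiv 0$, ``the algebra should collapse the two remaining eigenvalues of $A$ to a single value.'' It does not. The Einstein condition only says (Lemma \ref{lem-T-einstein}) that the vertical principal curvatures are roots of a single quadratic $s^2-Hs+\sigma=0$, and nothing in that pointwise algebra prevents the two roots from being distinct: Theorem \ref{Einstein-cylinder}-(ii)(c) exhibits exactly such a configuration --- a nontrivial Einstein hypersurface with $\lambda_1\equiv 0$ and two distinct, nonzero vertical principal curvatures (there $\theta=0$ rather than $\theta\ne 0$, but the pointwise Gauss--Einstein algebra is identical, so no pointwise argument can distinguish the cases). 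The paper's proof of (iii) is instead a differential argument along the parallel family: from $\lambda_1\equiv0$ and \eqref{lemma-lambda1} one gets $\theta\omega=\mathrm{const}$; one then combines the rewritten Einstein system \eqref{system-lambda-mu-v2}, the Riccati equation \eqref{lambda-i-dev} for $\tfrac{d}{ds}\lambda_i^s$ along the parallel family, and Cartan's identity $\lambda^s\mu^s=-\epsilon$ for isoparametric families with two curvatures, to force $(\lambda^s-\mu^s)^2=0$ and reach a contradiction. All three of these inputs are global/differential data about the family $\{f_s\}$; none is available from the block decomposition of $A$ at a point. Even the final step ``constant sectional curvature, hence trivial'' is obtained in the paper not by comparing sectional curvatures directly but by showing all vertical sections are umbilical and invoking Theorem \ref{th-einstein}.

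Two further, smaller issues. In part (i) you propose to re-derive the parallelism of the level sets from scratch; the paper avoids this entirely by conjugating with the conformal diffeomorphism $\varphi$ of \eqref{eq-varphi} into the unwarped product $J\times\Q_\epsilon^n$ and citing \cite[Theorem 1]{tojeiro}. Your route could work, but the step you defer --- that the $T$-property forces $\|T\|$ and $\theta$ to be constant along the level sets, equivalently that the horizontal projections of the flow lines of $T$ are geodesics of $\Q_\epsilon^n$ --- is precisely the content of the cited theorem, so you have not gained anything by not citing it. In part (ii), your claim that the Einstein condition pins each principal curvature down as a function of $\xi$ alone needs a case split you omit: when $\lambda_1\equiv0$ the first equation of \eqref{T-einstein-lambda007} no longer determines $H$, and one must instead use the rewritten system \eqref{system-lambda-mu-rewritten} to see that $\lambda$ and $\mu$ depend only on $s$; when $\lambda_1\ne0$ the paper deduces only that $H$ (hence the mean curvature of each $f_s$) depends on $s$ alone and then invokes Cartan's theorem that a parallel family of CMC hypersurfaces is isoparametric --- a cleaner route than trying to control each eigenvalue individually via Codazzi.
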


Theorem \ref{th-betazero}-(iii) opens the possibility of existence of
nontrivial Einstein  hypersurfaces in $\wi.$
As we have seen, they are easily obtained in
$\mathbb{S}^{n+1},$ which turns out to be
a space form that can be expressed as a warped product.
However, nontrivial  examples
of Einstein hypersurfaces of general warped products $\wi$
are not easy to  find. Here, we consider this problem restricting ourselves to the class
of constant angle hypersurfaces of $\wi,$ obtaining the following result.

\begin{theorem}  \label{Einstein-cylinder}
For $n>3,$ let $\Sigma$  be a connected oriented $\Lambda$-Einstein
hypersurface (not necessarily ideal) of $\wi$ with constant
angle function $\theta\in[0,1).$
Then, the following assertions hold:
\begin{itemize}[parsep=1ex]
  \item [\rm i)] If $\theta\ne 0,$ $\Sigma$ is trivial.
  \item [\rm ii)] If $\theta=0,$ $\omega$ is necessarily a solution of the ODE:
  \[
  (\omega')^2 + \dfrac{\Lambda}{n-1} \omega^2 + c =0, \,\, c\in\R,
  \]
  on an open subinterval of \,$I,$ and $I\times_\omega\q$ has
  nonconstant sectional curvature if $c+\epsilon\ne0.$  Regarding $\Sigma,$
  we distinguish the following cases, according to the sign of $c+\epsilon:$
  \begin{itemize}[parsep=1ex]
  \item[\rm a)] $c+\epsilon=0:$ $\Sigma$ is trivial and its shape operator has rank at most one.
  \item[\rm b)] $c+\epsilon<0:$ $\Sigma$ is  trivial and constitutes a cylinder over
  a totally umbilical hypersurface of \,$\mathbb{Q}^n_\epsilon.$
  \item[\rm c)] $c +\epsilon>0:$  $\Sigma$ is  nontrivial and constitutes a
  cylinder over a product
  of two spheres of \,$\s^n$  (so that $\epsilon$ is necessarily $1$ in this case).
  \end{itemize}
  \end{itemize}
\end{theorem}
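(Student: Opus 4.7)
Differentiating $\theta = \langle N,\partial_t\rangle$ along $X\in T\Sigma$, using $\bar\nabla_X N = -AX$ together with the warped-product identity $\bar\nabla_X\partial_t = (\omega'/\omega)(X - \langle X,T\rangle\partial_t)$ (valid on $T\Sigma$ since $\langle X,\partial_t\rangle = \langle X,T\rangle$), one computes
\[
X(\theta) = -\langle AX,T\rangle - \theta\,\frac{\omega'\circ\xi}{\omega\circ\xi}\,\langle X,T\rangle.
\]
The constancy of $\theta$ therefore yields the key identity $AT = -\theta(\omega'\circ\xi)/(\omega\circ\xi)\,T$, from which both cases will depart.

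\textbf{Case (i): $\theta\in(0,1)$.} Here $\|T\|^2 = 1-\theta^2$ is a nonzero constant, so $T$ never vanishes; the identity above immediately gives the $T$-property together with the explicit principal curvature $\lambda_T = -\theta(\omega'\circ\xi)/(\omega\circ\xi)$. Any $X\in T\Sigma$ with $\langle X,T\rangle = 0$ automatically satisfies $\langle X,\partial_t\rangle = 0$, so the tangent bundle of each vertical section $\Sigma_t = \Sigma\transv(\{t\}\times\Q_\epsilon^n)$ equals $T^\perp$ inside $T\Sigma$, and the shape operator of $\Sigma_t$ in $\{t\}\times\Q_\epsilon^n$ is pointwise proportional to $A|_{T^\perp}$. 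My plan is to show that $A|_{T^\perp}$ is a multiple of the identity by combining the Einstein equation with the Gauss equation on each eigenspace (the explicit $\lambda_T$ keeps the bookkeeping tractable); this makes each $\Sigma_t$ totally umbilical, and when $\Sigma$ is ideal the equivalence (i)$\Leftrightarrow$(ii) of Theorem~\ref{th-einstein} yields triviality. If instead $\beta\circ\xi\equiv 0$ on $\Sigma$, Theorem~\ref{th-betazero} applies, with its case (iii) ruled out because the two constant nonzero principal curvatures prescribed there are incompatible with the (typically non-constant) function $\lambda_T$ of $\xi$.

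\textbf{Case (ii): $\theta = 0$.} Then $N$ is horizontal and $\partial_t\in T\Sigma$, so locally $\Sigma = I\times M^{n-1}$ for a hypersurface $M^{n-1}\subset\Q_\epsilon^n$, with induced metric the warped product $I\times_\omega M^{n-1}$. Applying the standard Ricci formulas for warped products, the Einstein condition ${\rm Ric}_\Sigma = \Lambda g_\Sigma$ decouples into
\[
-(n-1)\omega''/\omega = \Lambda \quad\text{and}\quad {\rm Ric}_M - [\omega\omega'' + (n-2)(\omega')^2]\,g_M = \Lambda\omega^2 g_M.
\]
The first gives $\omega'' = -\Lambda\omega/(n-1)$; multiplying by $2\omega'$ and integrating produces the stated ODE $(\omega')^2 + \Lambda\omega^2/(n-1) + c = 0$. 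Plugging back into the second and simplifying yields ${\rm Ric}_M = -(n-2)c\,g_M$, so $M^{n-1}$ is an Einstein hypersurface of $\Q_\epsilon^n$. Direct computation of the ambient curvatures gives $K(\partial_t,X) = \Lambda/(n-1)$ and $K(X,Y) = \Lambda/(n-1) + (c+\epsilon)/\omega^2$ on horizontal planes, so $\wi$ has constant sectional curvature iff $c+\epsilon = 0$.

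\textbf{Sub-cases.} Since $n-1\geq 3$, one classifies the Einstein $M^{n-1}\subset\Q_\epsilon^n$ via Ryan's theorem (for $\epsilon\leq 0$, $M$ is CSC) and Ryan's classification of Einstein hypersurfaces of $\s^n$ (for $\epsilon = 1$, $M$ is totally umbilical or a Clifford product $\s^k\times\s^{n-1-k}$, the latter with Einstein constant $n-3$). Matching $\Lambda_M = -(n-2)c$: an umbilical $M$ with principal curvature $\lambda$ satisfies $\lambda^2 = -c-\epsilon$, admissible precisely when $c+\epsilon\leq 0$. Hence (a) $c+\epsilon = 0$ makes $\wi$ a space form, and Ryan's theorem applied directly to $\Sigma\subset\wi$ (or Theorem~\ref{th-betazero}) forces its shape operator to have rank $\leq 1$; (b) $c+\epsilon<0$ forces $M$ properly umbilical (Clifford is excluded by the mismatch of Einstein constants), and a direct computation of $K_\Sigma$ from $K_M = \epsilon + \lambda^2 = -c$ and the ODE gives $K_\Sigma\equiv\Lambda/(n-1)$, so $\Sigma$ is trivial; (c) $c+\epsilon>0$ rules out umbilicity, leaving only the Clifford alternative, which requires $\epsilon = 1$ and produces nontrivial $\Sigma$. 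The principal technical obstacle is the umbilicity reduction of Case (i), namely showing that the Einstein, Gauss, and Codazzi equations on the $(n-1)$-dimensional bundle $T^\perp$, together with the explicit $\lambda_T$, suffice to equalize all eigenvalues of $A|_{T^\perp}$.
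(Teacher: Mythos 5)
Your treatment of case (ii) is correct and takes a genuinely different route from the paper: you exploit the intrinsic warped-product structure $I\times_\omega M^{n-1}$ of $\Sigma$ and decouple the Einstein condition via the standard Ricci formulas, obtaining the ODE for $\omega$ and the identity ${\rm Ric}_M=-(n-2)c\,g_M$, and then classify $M$ by Ryan's theorems. The paper instead works extrinsically: it sets $\lambda_1\equiv 0$ in the two equations of Lemma \ref{lem-T-einstein}, reads off the same ODE from $(n-1)(\alpha+\beta)+\Lambda=0$, and analyzes the quadratic $(\lambda_i^0)^2-H^0\lambda_i^0-(n-2)(c+\epsilon)=0$ satisfied by the principal curvatures of the base $\Sigma_0$, using Cartan's identity $\lambda^0\mu^0=-\epsilon$ to force $\epsilon=1$ in case (c). Both routes are sound; yours is arguably cleaner and makes the appearance of the Einstein base $M^{n-1}$ conceptually transparent, while the paper's version produces the explicit radii of the spherical factors.

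Case (i), however, contains a genuine gap, which you yourself flag as the ``principal technical obstacle'': showing that $A|_{T^\perp}$ has a single eigenvalue. The plan of ``combining the Einstein equation with the Gauss equation on each eigenspace'' cannot close this by itself. The Einstein condition (Lemma \ref{lem-T-einstein}) only says that the $n-1$ vertical principal curvatures are roots of one quadratic, hence take at most two values $\lambda,\mu$; nothing algebraic forces $\lambda=\mu$, and indeed case (ii)(c) of the very theorem you are proving exhibits ideal Einstein hypersurfaces with $\lambda\ne\mu$ everywhere. So the hypothesis $\theta\in(0,1)$ constant must be used analytically, not just algebraically. The paper does this through Theorem \ref{L1-grapheinstein}: $\Sigma$ is locally a $(\phi,f_s)$-graph over an isoparametric family, the constancy of $\theta$ forces $\rho=\|T\|$ to be constant, and the resulting ODE system in $s$ (using the Riccati equation $\frac{d}{ds}\lambda_i^s=\epsilon+(\lambda_i^s)^2$ and Cartan's identity $\lambda^s\mu^s=-\epsilon$) yields $(\lambda^s-\mu^s)^2=0$. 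Some input of this Codazzi/isoparametric type is unavoidable. A secondary soft spot: your dismissal of Theorem \ref{th-betazero}-(iii) in the non-ideal reduction rests on $\lambda_T$ being ``typically non-constant,'' which is not an argument; the paper instead derives from the constancy of the principal curvatures and of $\alpha\circ\xi$ that $\omega$ is constant along $\Sigma'$, whence $\lambda_1=0$ contradicts $\lambda_1^2-H\lambda_1+\sigma=\sigma<0$. You also need to address the possibility that $\beta\circ\xi$ vanishes on part but not all of the connected $\Sigma$, which the paper handles before splitting into (i) and (ii).
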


We point out that Theorem \ref{Einstein-cylinder}-(ii)(c)
provides  examples of  nontrivial Einstein hypersurfaces in
warped products $I\times_\omega\s^n$ which are not  space forms.

Putting together Theorems \ref{th-einstein}--\ref{Einstein-cylinder} with classical
results on classification of isoparametric hypersurfaces in space forms,
we obtain the following characterization of ideal Einstein hypersurfaces
of $\wi.$

\begin{theorem} \label{th-final}
Let $\Sigma$ be an ideal Einstein hypersurface of $\wi,$ $n>3.$
Then, $\Sigma$ has a constant number of distinct principal curvatures ---
which is two or three --- all with constant multiplicities.
Moreover, $\Sigma$ is trivial
if and only if it has two distinct principal curvatures everywhere.
If so, $\Sigma$ is necessarily rotational.
\end{theorem}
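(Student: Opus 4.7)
The plan is to combine the three preceding theorems with the classical Somigliana--Levi-Civita/\'E.~Cartan classification of isoparametric hypersurfaces of $\Qe$ possessing at most two distinct principal curvatures.

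Since $\Sigma$ is ideal, Theorem \ref{th-einstein} guarantees the $T$-property, so $T$ is an eigenvector of the shape operator; denote by $\lambda$ the associated principal curvature, the remaining $n-1$ eigendirections lying in the horizontal distribution $T^\perp$. Ideality forces $T$ never to vanish, so the angle function $\theta$ satisfies $\theta^2<1$ everywhere. I would then split into two cases according to whether $\theta$ is identically zero or not.

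If $\theta\not\equiv 0$, then on the (nonempty, open) locus $\{\theta\ne 0\}$ Theorem \ref{L1-grapheinstein} realizes $\Sigma$ locally as a $(\phi,f_s)$-graph on a parallel isoparametric family $\{f_s\}\subset\Qe$ each of whose leaves has at most two distinct principal curvatures. By the Somigliana--Levi-Civita/Cartan classification, each $f_s$ is thus either totally umbilical or a standard product of two umbilical factors (e.g.\ $\s^k(r_1)\times\s^{n-1-k}(r_2)$ in $\s^n$, and the Euclidean/hyperbolic analogues). The graph representation expresses the horizontal principal curvatures of $\Sigma$ as a single $s$-dependent affine transform of those of $f_s$, so they display the same number of distinct values and the same multiplicities. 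Combined with $\lambda$, this gives at most $1+2=3$ distinct principal curvatures on $\Sigma$, all of constant multiplicity, the latter being inherited from the isoparametric structure of $f_s$ and transported across leaves by the parallelism of the family. Theorem \ref{th-einstein} then yields the equivalences: triviality $\Leftrightarrow$ umbilicity of the vertical sections $\Sigma_t$ $\Leftrightarrow$ umbilicity of each leaf $f_s$ $\Leftrightarrow$ $\Sigma$ has exactly two distinct principal curvatures everywhere, in which case Theorem \ref{th-einstein}(iii) additionally gives rotationality. Extension of the local conclusions to all of $\Sigma$ is handled by the constancy of the discrete invariants (the number of distinct principal curvatures and their multiplicities) on the connected hypersurface.

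The remaining case $\theta\equiv 0$ falls into Theorem \ref{Einstein-cylinder}(ii): subcases (a) and (b) produce a trivial cylinder over a totally umbilical factor of $\Qe$ --- the ideality hypothesis excluding the totally geodesic degeneracy --- which yields exactly two distinct principal curvatures, while subcase (c) gives a nontrivial cylinder over a product of two spheres of $\s^n$, contributing together with the axial direction exactly three distinct principal curvatures. The main obstacle will be the bookkeeping that transfers the count and multiplicities of the principal curvatures between the family $\{f_s\}$ and $\Sigma$, and, most delicately, ruling out the coincidence of $\lambda$ with one of the two horizontal eigenvalues when $f_s$ is non-umbilical; this requires exploiting the full Einstein equation (beyond the bare $T$-property) to pin the count to exactly two or three rather than permitting it to drop lower.
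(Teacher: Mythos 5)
Your overall strategy---combining Theorems \ref{th-einstein}, \ref{L1-grapheinstein} and \ref{Einstein-cylinder} with the classification of isoparametric hypersurfaces of $\Qe$ having at most two distinct principal curvatures---is the same as the paper's, and your case split according to $\theta$ correctly identifies the ingredients. However, there is a genuine gap at the step you dispose of in one sentence: ``Extension of the local conclusions to all of $\Sigma$ is handled by the constancy of the discrete invariants \dots on the connected hypersurface.'' That constancy is precisely what the theorem asserts and what must be proved; it does not follow from connectedness alone, since the locus $C=\{\lambda=\mu\}$ where the two vertical principal curvatures coincide is merely closed, and a priori $\Sigma$ could have two distinct principal curvatures on $C$ and three on its complement. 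The paper's proof is devoted almost entirely to excluding this mixed situation: it takes a boundary point $x$ of $\Sigma-C$, shows (via Theorem \ref{Einstein-cylinder} and Remark \ref{rem-thetacte}) that $\theta$ cannot vanish on any open neighborhood of $x$, produces a local $(\phi,f_s)$-graph contained in $\Sigma-C$ with $x$ in its boundary whose leaves $f_s$ are tubes about totally geodesic submanifolds, and then derives a contradiction because the horizontal projection to $\Qe$ of the vertical section through $x$ is simultaneously a limit of such tubes (hence itself part of a tube, which has no umbilic points) and umbilic at the projection of $x$. Nothing in your proposal substitutes for this argument; relatedly, you leave unaddressed the zero set of $\theta$ in the case $\theta\not\equiv 0$, where the graph representation is unavailable.

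By contrast, the point you single out as ``most delicate''---ruling out the coincidence of the principal curvature of $T$ with a vertical one---is immediate from Lemma \ref{lem-T-einstein}: the two equations in \eqref{T-einstein-lambda1} differ by the term $(n-2)(\beta\circ\xi)\|T\|^2$, which is nonzero by ideality, so no number can be a root of both; hence that principal curvature always has multiplicity one and $\Sigma$ has at least two distinct principal curvatures everywhere. You should make this observation explicit (it is also needed for your claimed equivalence between triviality and having exactly two principal curvatures, and, together with Ryan's result on the smoothness of eigendistributions, it gives the constant multiplicities in the three-curvature case), and then supply the boundary/limit-of-tubes argument, or a replacement for it, to close the main gap.
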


The paper is organized as follows. In Section \ref{sec-prelimminaries},
we set some notation and formulae. In Section \ref{sec-lemmas}, we establish three key lemmas.
In Section \ref{sec-csc}, we consider rotational CSC hypersurfaces of $\wi$ and prove Theorems \ref{th-existenceCSC01} and  \ref{th-CSC}.
Section \ref{sec-einstein} is reserved for the proofs of Theorems \ref{th-betazero} and \ref{th-einstein}. In  Section \ref{sec-graphs},
we introduce the  $(\phi,f_s)$-graphs we mentioned, establishing some
of its fundamental properties. In the final Section \ref{sec-last}, we prove
Theorems \ref{L1-grapheinstein}--\ref{th-final}.

\vtt
\emph{Added in proof.} After a preliminary version of this paper was completed, we became acquainted with
the work of V. Borges and A. da Silva \cite{borges-silva}, which  overlaps  with ours on some parts.

\section{Preliminaries} \label{sec-prelimminaries}

\subsection{Ricci tensor}
Given a Riemannian manifold $(M^n,\langle\,,\,\rangle)$ with curvature
tensor $R$ and tangent bundle $TM,$
recall that its \emph{Ricci tensor} is defined as:
\[
{\rm Ric}\,(X,Y):={\rm trace}\,\{Z\mapsto R(Z,X)Y\}, \,\,\, \,\, (X, Y)\in TM\times TM.
\]

We say that a frame $\{X_1\,,\dots ,X_n\}\subset TM$ \emph{diagonalizes} the Ricci tensor
of  $(M,\langle\,,\,\rangle)$ if
${\rm Ric}\,(X_i,X_j)=0\, \forall i\ne j\in\{1,\dots ,n\}.$
Notice that,  when $M$ is an Einstein manifold, any orthogonal
frame $\{X_1\,,\dots ,X_n\}$ in $TM$ diagonalizes its Ricci tensor.

\subsection{Hypersurfaces}
Let $\Sigma$ be an oriented hypersurface of a Riemannian manifold $\overbar M^{n+1}$ whose
unit normal field we denote by $N.$  Let $A$ be the shape operator of $\Sigma$ with respect to $N,$ that is,
$$
AX=-\overbar\nabla_XN,  \,\, X\in T\Sigma,
$$
where $\overbar\nabla$ is the Levi-Civita connection of $\overbar M^{n+1}.$

For such a $\Sigma$, and for $X,Y, Z, W\in T\Sigma$, one has the \emph{Gauss equation}:
\begin{equation}\label{eq-gauss}
\langle R(X,Y)Z,W\rangle = \langle\overbar R(X,Y)Z,W\rangle+\langle AX,W\rangle\langle AY,Z\rangle-\langle AX,Z\rangle\langle AY,W\rangle,
\end{equation}
where $R$ and $\overbar R$ denote the curvature tensors of $\Sigma$ and $\overbar M^{n+1},$  respectively.

Let $\lambda_1\,, \dots, \lambda_n$ be the principal curvatures of $\Sigma$, and
$\{X_1\,, \dots, X_n\}$ the corresponding orthonormal frame of principal directions, i.e.,
$$
AX_i=\lambda_iX_i\,, \,\,\,\, \langle X_i,X_j\rangle=\delta_{ij}\,.
$$
In this setting, for $1\le k\le n,$ the Gauss equation yields
\begin{equation}\label{eq-gauss01}
\langle R(X_k,Y)Z,X_k\rangle = \langle\overbar R(X_k,Y)Z,X_k\rangle+\lambda_k\langle AY,Z\rangle-\lambda_k^2\langle X_k,Z\rangle\langle X_k,Y\rangle.
\end{equation}

Thus,   for the Ricci tensor ${\rm Ric}$ of $\Sigma$ one has
\begin{equation}\label{eq-ricci01}
{\rm Ric}(Y,Z)=\sum_{k=1}^{n}\langle\overbar R(X_k,Y)Z,X_k\rangle+H\langle AY,Z\rangle-\sum_{k=1}^{n}\lambda_k^2\langle X_k,Z\rangle\langle X_k,Y\rangle,
\end{equation}
where $H={\rm trace}\, A$ is the (non normalized) mean curvature of $\Sigma.$
In particular, if we set $Y=X_i$ and $Z=X_j$\,, we have
\begin{equation}\label{eq-ricci02}
{\rm Ric}(X_i\,,X_j)=\sum_{k=1}^{n}\langle\overbar R(X_k,X_i)X_j,X_k\rangle+H\delta_{ij}\lambda_i-\delta_{ij}\lambda_i^2\,.
\end{equation}

\subsection{Hypersurfaces in $\wi$}
In the above setting, let us consider the particular case when $\overbar M^{n+1}$ is the warped product
$I\times_\omega\Q_\epsilon^n,$ where $I\subset\R$ is an open interval,
$\omega$ is a positive differentiable function defined on $I,$
and $\Q_\epsilon^n$ is one of the simply connected space forms of constant curvature
$\epsilon\in\{0,1,-1\}$: $\R^n$ ($\epsilon =0$),
$\s^n$ ($\epsilon =1$), or $\h^n$ ($\epsilon =-1$).

Recall that the Riemannian metric
of $\wi$, to be denoted by $\langle\,,\, \rangle,$ is:
\[
dt^2+\omega^2ds_\epsilon^2,
\]
where $dt^2$ and $ds_\epsilon^2$ are the standard Riemannian metrics of $\R$ and
$\Q_\epsilon^n,$  respectively.

We call \emph{horizontal} the fields of $T(\wi)$ which are tangent to $I,$ and
\emph{vertical} those which are tangent to $\Q_\epsilon^n.$ The gradient of
the projection $\pi_I$ on the first factor $I$ will be denoted by $\partial_t$\,.

Setting $\overbar\nabla$ and $\widetilde\nabla$\, for the Levi-Civita connections of
$\wi$ and $\Q_\epsilon^n,$ respectively,
for any  \emph{vertical} fields $X, Y\in T\Q_\epsilon^n,$
the following identities hold
(see \cite[Lema 7.3]{bishop-oneill}):
\begin{equation}\label{eq-connectionwarped}
  \begin{aligned}
    \overbar\nabla_XY &= \widetilde\nabla_XY-({\omega'}/{\omega})\langle X,Y\rangle\partial_t\,.\\
    \overbar{\nabla}_X\partial_t &=\overbar{\nabla}_{\partial_t}X=({\omega'}/{\omega}) X.\\
    \overbar{\nabla}_{\partial_t}\partial_t &= 0.
  \end{aligned}
\end{equation}

The curvature tensor $\overbar R$ of $I\times_\omega \,\Q_\epsilon^n$ is given by (cf. \cite{lawn-ortega})
\begin{eqnarray} \label{eq-barcurvaturetensor}
  \langle\overbar R(X,Y)Z,W\rangle  &=& (\alpha\circ\pi_I)(\langle X,Z\rangle\langle Y,W\rangle-\langle X,W\rangle\langle Y,Z\rangle) \nonumber \\
                            && +(\beta\circ\pi_I)(\langle X,Z\rangle\langle Y,\partial_t\rangle\langle W,\partial_t\rangle-\langle Y,Z\rangle\langle X,\partial_t\rangle\langle W,\partial_t\rangle \\
                            && -\langle X,W\rangle\langle Y,\partial_t\rangle\langle Z,\partial_t\rangle+\langle Y,W\rangle\langle X,\partial_t\rangle\langle Z,\partial_t\rangle), \nonumber
\end{eqnarray}
where $X,Y, Z, W\in T(I\times_\omega\,\Q_\epsilon^n),$ and $\alpha$ and $\beta$ are the following functions on $I$:
\begin{equation} \label{eq-def-alpha-beta}
\alpha:=\frac{(\omega')^2-\epsilon}{\omega^2} \quad\text{and}\quad \beta:= \frac{\omega''}{\omega}-\alpha.
\end{equation}

It is easily seen from \eqref{eq-barcurvaturetensor} that $I\times_\omega\q$ has
constant sectional curvature if and only if $\alpha$ is constant and $\beta$ vanishes on $I.$
Also, a direct computation yields:
\begin{equation}\label{eq-alphaprime}
\alpha'=\frac{2\omega'}{\omega}\beta.
\end{equation}

Given a hypersurface $\Sigma$ of $\wi,$
the \emph{height} function $\xi$ and the \emph{angle} function $\theta$ of $\Sigma$
are defined as
$$
\xi:=\pi_{\scriptscriptstyle I}|_\Sigma \quad\text{and}\quad \Theta(x):=\langle N(x),\partial_t\rangle, \,\, x\in\Sigma.
$$

The gradient of $\xi$ on $\Sigma$ is denoted by $T,$ so that
\begin{equation} \label{eq-Tandtheta}
T=\partial_t-\Theta N.
\end{equation}

\begin{definition}
A connected Einstein  hypersurface $\Sigma$ of $\wi$ on which
$(\beta\circ\xi)T$ never vanishes will be called \emph{ideal}.
\end{definition}

Regarding the gradient of $\theta,$ let us notice first that,
from the equalities \eqref{eq-connectionwarped}, for all $X\in T(\wi),$ one  has
\begin{equation} \label{eq-connectionwarp2}
\overbar\nabla_X\partial_t=\overbar\nabla_{X-\langle X,\partial_t\rangle\partial_t}\partial_t=
\frac{\omega'}{\omega}\left(X-\langle X,\partial_t\rangle\partial_t\right),
\end{equation}
where, by abuse of notation, we are writing $\omega\circ\xi=\omega$ and
$\omega'\circ\xi=\omega'$.
Therefore, for any  $X\in T\Sigma,$ we have
\[
X(\theta)=\langle\overbar\nabla_XN,\partial_t\rangle+\langle N,\overbar\nabla_X\partial_t\rangle=-\langle AX,T\rangle-\frac{\omega'}{\omega}\theta\langle T,X\rangle=
-\langle AT,X\rangle-\frac{\omega'}{\omega}\theta\langle T,X\rangle.
\]
Hence, the gradient of $\theta$ is
\begin{equation} \label{eq-gradthetawarp}
\nabla\theta=-\left(A+\frac{\omega'}{\omega}\theta\,{\rm Id}\right)T,
\end{equation}
where ${\rm Id}$ stands for the identity map of $T\Sigma.$

The following concept will play a fundamental role in this paper.

\begin{definition}
We say that a hypersurface  $\Sigma$ of $\wi$ has the $T$-\emph{property} if $T$ is a principal direction
of $\Sigma,$ i.e., $T$ never vanishes and there is a differentiable real function  $\lambda$ on $\Sigma$ such that
$AT=\lambda T$, where $A$ is the shape operator of $\Sigma.$
\end{definition}

Finally, it follows from \eqref{eq-barcurvaturetensor} that, for any hypersurface
$\Sigma$ of $\wi,$ the
Gauss equation \eqref{eq-gauss} takes the form:
\begin{eqnarray} \label{eq-gauss02}
  \langle R(X,Y)Z,W\rangle  &=& (\alpha\circ\xi)(\langle X,Z\rangle\langle Y,W\rangle-\langle X,W\rangle\langle Y,Z\rangle) \nonumber \\
                            &&+ (\beta\circ\xi)(\langle X,Z\rangle\langle Y,T \rangle\langle W,T \rangle-\langle Y,Z\rangle\langle X,T \rangle\langle W,T \rangle \nonumber \\
                            &&- \langle X,W\rangle\langle Y,T \rangle\langle Z,T\rangle+\langle Y,W\rangle\langle X,T\rangle\langle Z,T\rangle)  \\
                            &&+ \langle AX,W\rangle\langle AY,Z\rangle-\langle AX,Z\rangle\langle AY,W\rangle. \nonumber
\end{eqnarray}


\section{Three Key Lemmas} \label{sec-lemmas}

Given a hypersurface $\Sigma\subset I\times_\omega\Q_\epsilon^n,$ any nonempty transversal intersection
$$
\Sigma_t:=\Sigma\transv(\{t\}\times\Q_\epsilon^n)
$$
is a hypersurface of $(\{t\}\times\Q_\epsilon^n,ds_\epsilon^2)$
which we call the \emph{vertical section} of $\Sigma$ at height $t.$ If $\Sigma$ is oriented with
unit normal $N,$ it is easily seen that
$N-\theta\partial_t$ is tangent to
$\{t\}\times\Q_\epsilon^n$ and orthogonal to $\Sigma_t$\,, so that
$\Sigma_t$ is orientable as well.
Denoting also by $\langle\,,\,\rangle_{\epsilon}$ the Riemannian metric $ds_\epsilon^2$ of $\Q_\epsilon^n,$ one has
\[
\omega^2\langle N-\theta\partial_t,N-\theta\partial_t\rangle_{\epsilon}=\langle N-\theta\partial_t,N-\theta\partial_t\rangle=1-\theta^2=\|T\|^2.
\]
Hence, a unit normal
to $\Sigma_t$ in $(\{t\}\times\Q_\epsilon^n,ds_\epsilon^2)$ is given by
\begin{equation} \label{eq-Nt}
N_t=-\frac{\omega(t)(N-\theta\partial_t)}{\|T\|}\,\cdot
\end{equation}

We shall denote by $A_t$ the
shape operator of $\Sigma_t$ with respect to $N_t\,,$
that is,
$$
A_tX=-\widetilde\nabla_XN_t\,, \,\,\, X\in T\Sigma_t\subset T\Sigma,
$$
where $\widetilde\nabla$ stands for the Levi-Civita connection of
$(\Q_\epsilon^n,ds_\epsilon^2).$

Our first lemma establishes the relation between the shape operator
$A$ of a hypersurface $\Sigma\subset\wi$ with the shape operator $A_t$ of a vertical section
$\Sigma_t\subset(\{t\}\times\Q_\epsilon^n,ds_\epsilon^2).$

\begin{lemma} \label{lem-verticalsection}
For $n\ge 2,$ let $\Sigma$ be a hypersurface of $I\times_\omega\Q_\epsilon^n,$  and let
$\Sigma_t$ be one of its vertical sections (considered as a hypersurface of $(\{t\}\times\Q_\epsilon^n,ds_\epsilon^2)$).
Then, the following identity holds:
\begin{equation}  \label{eq-At001}
\langle A_t X,Y\rangle=-\frac{\omega}{\|T\|}\left(\langle AX,Y\rangle+\frac{\theta\omega'}{\omega}\langle X,Y\rangle\right)
\,\,\,\, \forall X, Y\in T\Sigma_t\subset T\Sigma.
\end{equation}
Consequently, if \,$T$ is a principal direction of $\Sigma$ along $\Sigma_t$
with corresponding principal curvature $\lambda_1,$ the principal curvatures $\lambda_2,\dots, \lambda_n$
of $\Sigma$ are given by
\begin{equation} \label{eq-principalcurvatureslemma}
\lambda_i=-\frac{\|T\|\lambda_i^t+\omega'\theta}{\omega}\,,  \,\,\,\,\, i=2,\dots, n,
\end{equation}
where  $\lambda_i^t$ denotes the $i$-th principal curvature of $\Sigma_t.$
\end{lemma}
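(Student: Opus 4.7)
The plan is to obtain \eqref{eq-At001} by a direct computation of $\widetilde\nabla_X N_t$ via the ambient connection $\overline\nabla$, exploiting the explicit expression $N_t=-\omega(N-\theta\partial_t)/\|T\|$ from \eqref{eq-Nt}. A key observation is that $N-\theta\partial_t$ is vertical on all of $\Sigma$ (since $\langle N-\theta\partial_t,\partial_t\rangle=\theta-\theta=0$), so the same formula defines a vertical vector field along $\Sigma$ extending $N_t$ beyond $\Sigma_t$.

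I would first compute $\overline\nabla_X N_t$ for $X\in T\Sigma_t$ by the Leibniz rule, invoking the Weingarten formula $\overline\nabla_X N=-AX$ and the identity $\overline\nabla_X\partial_t=(\omega'/\omega)X$ from \eqref{eq-connectionwarp2} (valid since $X$ is vertical). The resulting expression contains terms along $AX$, $X$, $N-\theta\partial_t$, and $\partial_t$. Pairing with $Y\in T\Sigma_t$ kills the last two contributions, because $Y$ is simultaneously tangent to $\Sigma$ and vertical, so $\langle Y,N\rangle=\langle Y,\partial_t\rangle=0$. What remains is precisely
\[
\langle \overline\nabla_X N_t,Y\rangle=\frac{\omega}{\|T\|}\langle AX,Y\rangle+\frac{\omega'\theta}{\|T\|}\langle X,Y\rangle.
\]

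The next step is to pass from $\overline\nabla$ to $\widetilde\nabla$. Applying \eqref{eq-connectionwarped} to the vertical pair $(X,N_t)$ gives $\overline\nabla_X N_t=\widetilde\nabla_X N_t-(\omega'/\omega)\langle X,N_t\rangle\partial_t$; but $N_t$ is normal to $\Sigma_t$, hence to $X$, so $\langle X,N_t\rangle=0$ and the correction drops out. Since $\omega$ is constant along the leaf $\{t\}\times\q$, the restriction of the warped metric to that leaf is a constant multiple of $ds_\epsilon^2$, and the identity $\langle A_tX,Y\rangle=-\langle\widetilde\nabla_X N_t,Y\rangle$ transfers consistently from one metric to the other via the common factor $\omega^2$. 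Combining these observations with the display above yields \eqref{eq-At001}.

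For the consequence about principal curvatures, I would note that inside $T\Sigma$ the orthogonal complement of $T$ coincides with $T\Sigma_t$: indeed $\langle X,T\rangle=\langle X,\partial_t-\theta N\rangle=\langle X,\partial_t\rangle$ for $X\in T\Sigma$, so $X\perp T$ exactly when $X$ is vertical. Thus if $T$ is a principal direction of $A$, the remaining principal directions $X_2,\dots,X_n$ all lie in $T\Sigma_t$, and plugging $X=X_i$, $Y=X_j$ into \eqref{eq-At001} shows they are simultaneously eigenvectors of $A_t$ with $\lambda_i^t=-(\omega\lambda_i+\omega'\theta)/\|T\|$. Solving for $\lambda_i$ gives \eqref{eq-principalcurvatureslemma}. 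The only genuine obstacle is bookkeeping: one must keep track of which tangent vectors are vertical and of the two Riemannian metrics in play, but once $\langle X,N_t\rangle=0$ is used, the computation becomes essentially mechanical.
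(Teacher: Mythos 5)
Your proposal is correct and follows essentially the same route as the paper: both compute $\overline\nabla_X N_t$ from the explicit formula \eqref{eq-Nt} using the Weingarten equation and \eqref{eq-connectionwarped}, use $\langle X,N_t\rangle=0$ to identify $\widetilde\nabla_X N_t$ with $\overline\nabla_X N_t$, and then read off the eigenvalue relation from the invariance of $T\Sigma_t=\{T\}^\perp\cap T\Sigma$ under $A$. Your explicit remark that the warped metric restricts to $\omega^2 ds_\epsilon^2$ on the leaf is a point the paper leaves implicit, but otherwise the two arguments coincide.
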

\begin{proof}
Given $X\in T\Sigma_t,$ it follows from the first identity \eqref{eq-connectionwarped} that
$\widetilde\nabla_XN_t=\overbar\nabla_XN_t,$ for $\langle X,N_t\rangle=0.$
However,
from \eqref{eq-Nt}, we have
\[
-\overbar\nabla_XN_t=X({\omega}/{\|T\|})(N-\theta\partial_t)+
({\omega}/{\|T\|})(\overbar\nabla_XN-X(\theta)\partial_t-\theta\overbar\nabla_X\partial_t).
\]
Thus, since $\overbar\nabla_X\partial_t=(\omega'/\omega)X$ and $AX=-\overbar\nabla_XN,$ for all $Y\in T\Sigma_t$\,, one has
\[
\langle A_tX,Y\rangle=-\langle \widetilde\nabla_XN_t,Y\rangle=
-\langle\overbar\nabla_XN_t,Y\rangle=-\frac{\omega}{\|T\|}\langle AX+\theta(\omega/\omega')X,Y\rangle,
\]
as we wished to prove.

Now, assuming that $T$ is a principal direction of $\Sigma$
along $\Sigma_t,$  we have that $T\Sigma_t$ is invariant by $A.$
This, together with \eqref{eq-At001}, then yields
\[
A_t=-\frac{\omega}{\|T\|}\left(A+\theta({\omega'}/{\omega}){\rm Id}\right){|_{T\Sigma_t}}\,,
\]
where $\rm{Id}$ denotes the identity map of $T\Sigma_t.$

From this last equality,
one easily concludes that any principal direction $X_i$ of $\Sigma_t$ with
corresponding principal curvature $\lambda_i^t$ is a principal direction of
$\Sigma$ with corresponding principal curvature $\lambda_i$ as in
\eqref{eq-principalcurvatureslemma}.
\end{proof}

Now, we establish a necessary and sufficient condition
for an arbitrary  hypersurface of $\wi$ to have the $T$-property.

\begin{lemma} \label{lem-T}
For $n>2,$ let $\Sigma$  be a hypersurface of $I\times_\omega \,\Q_\epsilon^n$ on which
$(\beta\circ\xi)T$  never vanishes.  Then,  $\Sigma$ has the
$T$-property if and only if the principal directions $X_1\,, \dots ,X_n$ of $\Sigma$  diagonalize its Ricci tensor.
In particular, if $\Sigma$ is Einstein, it has the $T$-property.
\end{lemma}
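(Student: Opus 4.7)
The strategy is to compute the off-diagonal entries of the Ricci tensor of $\Sigma$ in the principal frame $\{X_1,\dots,X_n\}$ and show that they reduce to a single clean expression involving $\beta\circ\xi$, $T$, and the factor $(2-n)$. The equivalence then falls out of the hypothesis that $(\beta\circ\xi)T$ is nowhere zero.

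First, I would fix $i\ne j$ and apply \eqref{eq-ricci02}, noting that the shape-operator contribution $H\delta_{ij}\lambda_i-\delta_{ij}\lambda_i^2$ vanishes. So
\[
{\rm Ric}(X_i,X_j)=\sum_{k=1}^{n}\langle\overbar R(X_k,X_i)X_j,X_k\rangle.
\]
Next, I would substitute the ambient curvature formula \eqref{eq-barcurvaturetensor} with $X=W=X_k$, $Y=X_i$, $Z=X_j$. Since the tangential components of $\partial_t$ agree with $T$ on $T\Sigma$, each inner product $\langle X_\ell,\partial_t\rangle$ becomes $\langle X_\ell,T\rangle$. The $\alpha$-part of \eqref{eq-barcurvaturetensor} contributes a multiple of $\delta_{kj}\delta_{ik}-\delta_{ij}$, which vanishes identically for $i\ne j$. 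The $\beta$-part yields four terms; summing over $k$ gives
\[
\sum_{k=1}^n\bigl[\delta_{kj}\langle X_i,T\rangle\langle X_k,T\rangle-\langle X_i,T\rangle\langle X_j,T\rangle+\delta_{ik}\langle X_k,T\rangle\langle X_j,T\rangle\bigr]=(2-n)\langle X_i,T\rangle\langle X_j,T\rangle,
\]
the $\delta_{ij}$-term dropping out. Therefore
\[
{\rm Ric}(X_i,X_j)=(2-n)(\beta\circ\xi)\langle X_i,T\rangle\langle X_j,T\rangle,\qquad i\ne j.
\]

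With this identity in hand, the proof is immediate. Because $n>2$ and $(\beta\circ\xi)T$ never vanishes, the principal frame diagonalizes ${\rm Ric}$ if and only if $\langle X_i,T\rangle\langle X_j,T\rangle=0$ for every $i\ne j$. Since $T\ne 0$, at most one index $k$ can have $\langle X_k,T\rangle\ne 0$, forcing $T$ to be a nonzero scalar multiple of $X_k$, i.e.\ a principal direction; the converse implication is trivial. Finally, if $\Sigma$ is Einstein, then every orthonormal frame (in particular $\{X_1,\dots,X_n\}$) diagonalizes ${\rm Ric}$, so $\Sigma$ has the $T$-property.

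The only delicate point is the bookkeeping in the sum that produces the factor $(2-n)$: this is where the hypothesis $n>2$ is genuinely used, and where one must be careful that the $\delta_{ij}$-term and the $\alpha$-term disappear precisely because $i\ne j$. Everything else is direct algebra from \eqref{eq-ricci02} and \eqref{eq-barcurvaturetensor}.
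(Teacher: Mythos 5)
Your proposal is correct and follows essentially the same route as the paper: both compute the off-diagonal Ricci entries via \eqref{eq-ricci02} and \eqref{eq-barcurvaturetensor} to obtain ${\rm Ric}(X_i,X_j)=(2-n)(\beta\circ\xi)\langle X_i,T\rangle\langle X_j,T\rangle$ and then read off the equivalence from the nonvanishing of $(\beta\circ\xi)T$. The only cosmetic difference is that you carry out the full sum over $k$ explicitly, whereas the paper evaluates the generic term with $k\ne i,j$ and counts multiplicities; the conclusion and its justification are identical.
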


\begin{proof}
Choosing $i, j, k\in\{1,\dots ,n\}$ with $i\ne j\ne k\ne i,$ we have from \eqref{eq-barcurvaturetensor} that
\[
\langle\overbar R(X_k,X_i)X_j,X_k\rangle= -\beta\langle X_i,T\rangle\langle X_j,T\rangle.
\]
Combining this equality with \eqref{eq-ricci02}, we get
$${\rm Ric}(X_i\,,X_j)=\sum_{k=1}^{n}\langle\overbar R(X_k,X_i)X_j,X_k\rangle=(2-n)(\beta\circ\xi)\langle X_i\,, T\rangle\langle X_j\,, T\rangle,$$
from which the result follows.
\end{proof}

Finally, as a direct consequence of equation \eqref{eq-ricci02}, we obtain  necessary and sufficient
conditions on the principal curvatures of a hypersurface
$\Sigma\subset\wi$ with the $T$-property to be an Einstein manifold.

\begin{lemma} \label{lem-T-einstein}
For $n>2,$ let $\Sigma$  be a hypersurface of \,$I\times_\omega \,\Q_\epsilon^n$
having the $T$-property. Let $\lambda_1\,, \dots, \lambda_n$ be the principal curvatures of $\Sigma$, and
let $\{X_1\,, \dots, X_n\}$ be the corresponding orthonormal frame of principal directions
with $X_1 = T / ||T||$. Then, $\Sigma$ is a $\Lambda$-Einstein hypersurface if and only if
the following equalities hold:
\begin{equation}\label{T-einstein-lambda1}
  \begin{aligned}
    \lambda_1^2 - H\lambda_1 + (n-1)((\beta \circ \xi) ||T||^2 + (\alpha \circ \xi)) + \Lambda &= 0,\\[1ex]
    \lambda_i^2 - H \lambda_i + (\beta \circ \xi) ||T||^2 + (n-1) (\alpha \circ \xi) + \Lambda &=0, \,\,\,\, 2\le i\le n.
  \end{aligned}
\end{equation}
\end{lemma}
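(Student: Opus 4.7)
\medskip
\noindent\textbf{Proof plan.}
The statement is essentially a direct reading of the diagonal Ricci identity \eqref{eq-ricci02} once one chooses the frame adapted to the $T$-property. So the plan is computational: I would start from
\[
{\rm Ric}(X_i,X_i)=\sum_{k=1}^{n}\langle\overbar R(X_k,X_i)X_i,X_k\rangle + H\lambda_i-\lambda_i^2,
\]
and show that the $\Lambda$-Einstein condition reduces to the two displayed equations. Before touching the diagonal entries, I would first observe that, under the $T$-property, the off-diagonal Ricci components automatically vanish: by the computation already carried out in the proof of Lemma \ref{lem-T}, for $i\ne j$ one has ${\rm Ric}(X_i,X_j)=(2-n)(\beta\circ\xi)\langle X_i,T\rangle\langle X_j,T\rangle$, and since $X_1=T/\|T\|$ and $X_i\perp T$ for $i\ge 2$, at least one factor is zero for every $i\ne j$. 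Hence being $\Lambda$-Einstein is equivalent to ${\rm Ric}(X_i,X_i)=\Lambda$ for all $i$, independently of whether $(\beta\circ\xi)T$ vanishes or not.

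Next I would compute the ambient curvature sum appearing in \eqref{eq-ricci02} directly from \eqref{eq-barcurvaturetensor}, splitting into the cases $i=1$ and $i\ge 2$. For $i=1$, only the terms with $k\ne 1$ contribute; plugging $X=X_k$, $Y=Z=X_1=T/\|T\|$, $W=X_k$ into \eqref{eq-barcurvaturetensor} gives, for each such $k$, a contribution of $-\alpha - \beta\|T\|^2$, using $\langle X_k,X_1\rangle=0$, $\langle X_1,T\rangle=\|T\|$ and $\langle X_k,T\rangle=0$. Summing over the $n-1$ values of $k\ne 1$ produces $-(n-1)((\beta\circ\xi)\|T\|^2+(\alpha\circ\xi))$, and inserting into \eqref{eq-ricci02} together with ${\rm Ric}(X_1,X_1)=\Lambda$ yields exactly the first equation in \eqref{T-einstein-lambda1}. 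For $i\ge 2$, the same expansion gives a contribution of $-\alpha$ from each $k\ne i$, plus an extra $-\beta\|T\|^2$ coming uniquely from the term $k=1$ (since $\langle X_k,T\rangle=0$ when $k\ge 2$). Summing and imposing ${\rm Ric}(X_i,X_i)=\Lambda$ yields the second equation in \eqref{T-einstein-lambda1}.

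For the converse direction, I would simply reverse the last computation: given that the two scalar equations hold, the formula for ${\rm Ric}(X_i,X_i)$ becomes $\Lambda$ for every $i$, and since the off-diagonal Ricci entries already vanish by the $T$-property as observed at the start, the Ricci tensor equals $\Lambda\langle\,,\,\rangle$ in the $X_i$-frame and hence globally. There is no genuine obstacle here; the only point to be careful about is bookkeeping the two kinds of contributions coming from the curvature tensor \eqref{eq-barcurvaturetensor}, namely the $\alpha$-piece (which behaves as in a constant-curvature ambient) and the $\beta$-piece (which is sensitive to the position of $T$ in the frame and is responsible for the asymmetry between the equation for $\lambda_1$ and the equations for $\lambda_i$, $i\ge 2$).
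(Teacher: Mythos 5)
Your proposal is correct and is exactly the argument the paper intends: the paper's proof simply says the result is "straightforward" from the diagonal Ricci identity \eqref{eq-ricci02} evaluated at $X_i=X_j=X_1$ and $X_i=X_j\ne X_1$, which is precisely the computation you carry out (your curvature contributions $-(n-1)(\alpha+\beta\|T\|^2)$ for $i=1$ and $-(n-1)\alpha-\beta\|T\|^2$ for $i\ge 2$ check out). Your added observation that the off-diagonal Ricci entries vanish in the adapted frame by the computation of Lemma \ref{lem-T}, so that the two scalar equations are genuinely equivalent to the Einstein condition, is a detail the paper leaves implicit but is correctly handled.
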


\begin{proof}
The proof is straightforward considering the identity \eqref{eq-ricci02}
for $X_i=X_j=X_1$ and $X_i=X_j \neq X_1,$ respectively.
\end{proof}

\section{Rotational CSC Hypersurfaces of $I\times_\omega\Q_\epsilon^n$} \label{sec-csc}

In \cite{dillenetal}, the authors introduced a class of  hypersurfaces, called
\emph{rotational}, which was considered in \cite{manfio-tojeiro} to construct and classify CSC hypersurfaces
of $\R\times\q$ (interchanging factors).
Such a  rotational hypersurface $\Sigma$ is characterized by the fact that
the horizontal projections on $\q$ of its  vertical sections
 constitute a parallel family $\mathscr F$ of totally umbilical hypersurfaces. In this way,
 $\Sigma$ is called \emph{spherical} if the members of $\mathscr F$
 are geodesic spheres of $\q,$ \emph{parabolic} if they are horospheres
of $\h^n$, and \emph{hyperbolic} if they are equidistant hypersurfaces of $\h^n.$

\begin{table}[hbtp]%
\centering %
\begin{tabular}{cccc}
\toprule %
$f(x)$ &  $\epsilon$   & rotational type \\\otoprule %
$\cos x$ & $\phantom +1$ & spherical \\
$\sinh x$ &  $ -1$& spherical \\
$x$ &  $\phantom{-}0$ & spherical \\
$x$ & $-1$& parabolic \\
$\cosh x$ & $-1$& hyperbolic \\\bottomrule
\end{tabular}
\vtt
\caption{Definition of $f.$}
\label{table-f}
\end{table}

Given an interval $I\subset\R,$
any rotational hypersurface $\Sigma\subset I\times\q$ can be parametrized by means of a plane curve defined
by two differentiable functions on an open interval.
Here, we  denote these functions by $\phi=\phi(s)$ and $\xi=\xi(s)$
(see \cite{manfio-tojeiro} for details).
In this setting,  we can replace $I\times\q$ by $\wi$ and consider $\Sigma$ as a hypersurface of
the latter. Then, assuming that either
\[
(\xi')^2+(\omega(\xi)\phi')^2=1 \quad\text{or}\quad (\xi')^2+\left(\omega(\xi)\frac{\phi'}{\phi}\right)^2=1\,\,\,(\text{if $\Sigma$ is parabolic}),
\]
and performing computations analogous to the ones in \cite[Section 4]{manfio-tojeiro}, one concludes that the
metric $d\sigma^2$ on $\Sigma$ induced by the warped metric of $\wi$ is  a warped metric as well,
which is given by
\begin{equation} \label{eq-rotationalmetric}
d\sigma^2=ds^2+(\omega(\xi)f(\phi))^2du^2,
\end{equation}
where $f,$ as defined in Table \ref{table-f},
depends on $\epsilon$ and the rotational type of $\Sigma$,
and $du^2$ is the standard metric of
$\s^{n-1},$  $\R^{n-1}$ or $\h^{n-1},$  according to
whether $\Sigma$ is spherical, parabolic or hyperbolic,
respectively.

Let $\Sigma$ be a rotational hypersurface of $\wi$ whose metric
$d\sigma^2$ is given by \eqref{eq-rotationalmetric}.
By \cite[Proposition 4.6]{manfio-tojeiro}, $\Sigma$ has constant sectional curvature
$c\in\R$ if its warping function $(\omega\circ\xi)f(\phi)$ coincides with the
function $\psi=\psi(s),$ defined in Table \ref{table-psi}, which depends on
$\epsilon,$ the sign of $c,$ and the rotational type of $\Sigma.$
\begin{table}[thb]%
\centering %
\begin{tabular}{cccc}
\toprule %
$\psi(s)$ & sign of $c$ & $\epsilon$   & rotational type \\\otoprule %
$\frac{1}{\sqrt c}\sin\sqrt c s$ & $c>0$ & $0,\pm 1$& spherical \\
$ s$ & $c=0$ & $0,\pm 1$& spherical \\
$\frac{1}{\sqrt{-c}}\sinh\sqrt{-c}s$ & $c<0$ & $0,-1$& spherical \\\midrule
$e^{\sqrt{-c}s}$ & $c<0$ & $-1$& parabolic \\
cte & $c=0$ & $-1$& parabolic \\\midrule
$\frac{1}{\sqrt{-c}}\cosh\sqrt{-c}s$ & $c<0$ & $-1$& hyperbolic \\\bottomrule
\end{tabular}
\vtt
\caption{Definition of $\psi.$}
\label{table-psi}
\end{table}

Therefore, given $c\in\R,$ our problem of finding a rotational hypersurface of $\wi$
with constant sectional curvature $c$ reduces to solving either the systems
\begin{equation} \label{eq-system}
\left\{
\begin{array}{rcc}
(\xi')^2+(\omega(\xi)\phi')^2 &=&1\\[1ex]
\omega(\xi)f(\phi) &=&\psi
\end{array}
\right.
\quad\text{or}\,\,\quad
\left\{
\begin{array}{rcl}
(\xi')^2+\left(\omega(\xi)\frac{\phi'}{\phi}\right)^2&=&1\\[1ex]
\omega(\xi)f(\phi) &=&\psi,
\end{array}
\right.
\end{equation}
where the second one is considered only in the parabolic case.

Let us assume that $\omega:I\rightarrow\omega(I)$ is a diffeomorphism, and
that $f(\phi)$ does not vanish. Under these assumptions,  the second equation of these
systems gives that
\begin{equation} \label{eq-xi}
\xi=\chi\left(\frac{\psi}{f(\phi)}\right),
\end{equation}
where $\chi:\omega(I)\rightarrow I$ is the inverse of $\omega.$ Of course,  equality \eqref{eq-xi}
makes sense only if  the range of ${\psi}/{f(\phi)}$ is contained in $\omega(I).$ Assuming that, taking
the derivative, and substituting $\xi'$ in the first equation of the system, we have that
$\phi$ must be a solution of  a nonlinear first order ODE
in the following form:
\begin{equation}\label{eq-EDOphi}
a_2(s,y)(y')^2+a_1(s,y)y'+a_0(s,y)=0,
\end{equation}
where each coefficient $a_i$ is a differentiable function.

Let us assume that, for some
pair $(s_0,y_0),$ we have
\begin{equation}\label{eq-Delta}
\Delta(s_0,y_0):=a_1^2(s_0,y_0)-4a_2(s_0,y_0)a_0(s_0,y_0)>0.
\end{equation}
In this case,  \eqref{eq-EDOphi} can be written as
\begin{equation}\label{eq-EDOphi2}
y'=F(s,y),
\end{equation}
where $F$ is a differentiable function in a neighborhood
of $(s_0,y_0).$

Therefore, if the inequality \eqref{eq-Delta} holds, we can apply
Picard's Theorem  and conclude that
there exists a solution
$y=\phi(s)$ to \eqref{eq-EDOphi2}, and so to \eqref{eq-EDOphi},
satisfying the initial condition $\phi(s_0)=y_0$ (see \cite[pg 82]{ince} for details).
Applying it to \eqref{eq-xi}, we obtain the function $\xi$,
so that $(\phi,\xi)$ is a solution of \eqref{eq-system}.

The following result, which will be considered in the proof
of Theorem \ref{th-existenceCSC01}, summarizes the above discussion.

\begin{proposition} \label{prop-solutionsystem}
Let $\omega:I\rightarrow\omega(I)$ be a diffeomorphism. Given
$c\in\R,$ suppose that there exists $s_0\in\R$ such that the function
$\xi$ given in \eqref{eq-xi007} is well defined in an open interval
$I_0\owns s_0$\,. If, for some $y_0\in\R,$ one has $\Delta(s_0,y_0)>0,$ then
the system \eqref{eq-system} has a solution $(\phi,\xi)$ defined in an open
interval contained in $I_0$\,. Consequently, there is a hypersurface
of constant sectional curvature $c$ in the
corresponding warped product $I\times_\omega\q.$
\end{proposition}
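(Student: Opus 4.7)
The plan is to formalize the derivation that immediately precedes the statement: reduce the system \eqref{eq-system} to a scalar first order ODE for $\phi$, apply Picard's theorem at the initial datum $(s_0,y_0)$ to obtain $\phi$ locally, recover $\xi$ from \eqref{eq-xi}, and finally invoke the rotational construction together with \cite[Proposition 4.6]{manfio-tojeiro} to conclude that the corresponding hypersurface has constant sectional curvature $c$.

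First, because $\omega:I\to\omega(I)$ is a diffeomorphism with inverse $\chi$, on any region where $f(\phi)$ does not vanish and $\psi/f(\phi)\in\omega(I)$, the second equation of \eqref{eq-system} is equivalent to the explicit relation $\xi=\chi(\psi/f(\phi))$ given in \eqref{eq-xi}. The hypothesis that $\xi$ is well defined on an open interval $I_0\ni s_0$ guarantees that, for $s$ near $s_0$, this substitution is legitimate, so differentiating \eqref{eq-xi} and plugging the resulting expression for $\xi'$ into the first equation of \eqref{eq-system} (or its parabolic counterpart) and clearing denominators produces an equation of the form \eqref{eq-EDOphi} whose coefficients $a_0,a_1,a_2$ are differentiable in $(s,y)$ on a neighborhood of $(s_0,y_0)$.

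Next, the assumption $\Delta(s_0,y_0)>0$ allows one to solve the quadratic \eqref{eq-EDOphi} for $y'$ by the quadratic formula on a neighborhood of $(s_0,y_0)$, yielding two smooth branches of the form $y'=F(s,y)$ with $F$ differentiable near $(s_0,y_0)$. Choosing either branch and invoking Picard's theorem (see \cite[pg.~82]{ince}) produces a differentiable solution $\phi$ on an open subinterval $J\subset I_0$ containing $s_0$ with $\phi(s_0)=y_0$. Setting $\xi$ on $J$ by \eqref{eq-xi} then gives, by construction, a pair $(\phi,\xi)$ solving \eqref{eq-system}.

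Finally, the pair $(\phi,\xi)$ parametrizes a rotational hypersurface $\Sigma\subset\wi$ whose induced metric has the warped form \eqref{eq-rotationalmetric} with warping function $(\omega\circ\xi)f(\phi)=\psi$. By the choice of $\psi$ recorded in Table \ref{table-psi} together with \cite[Proposition 4.6]{manfio-tojeiro}, this metric has constant sectional curvature $c$, completing the construction. The main point where care is needed is the verification that the coefficients $a_0,a_1,a_2$ really are differentiable at $(s_0,y_0)$ and that the branch chosen in the quadratic formula does not immediately force $f(\phi)$ to vanish; both follow from the standing hypotheses that $\xi$ in \eqref{eq-xi} is well defined on $I_0$ and that $\Delta(s_0,y_0)>0$, possibly after shrinking $J$.
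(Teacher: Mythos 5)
Your proposal is correct and follows essentially the same route as the paper, which explicitly presents Proposition \ref{prop-solutionsystem} as a summary of the discussion preceding it: substitute \eqref{eq-xi} into the first equation of \eqref{eq-system} to get the quadratic ODE \eqref{eq-EDOphi}, use $\Delta(s_0,y_0)>0$ to extract a differentiable branch $y'=F(s,y)$, apply Picard's theorem, and conclude via Table \ref{table-psi} and \cite[Proposition 4.6]{manfio-tojeiro}. Your added remarks on the differentiability of the coefficients and the choice of branch are sensible refinements of the same argument, not a departure from it.
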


\begin{proof}[Proof of Theorem \ref{th-existenceCSC01}]

We can assume,
without loss of generality,  that
$$\omega(I)=(0,\delta), \,\,\,  0<\delta\le+\infty,$$
for $\omega>0.$
Then, the hypothesis on $\omega'$ yields
\begin{equation}\label{eq-xiderivative}
|\chi'(u)|<1 \,\,\, \forall u\in (0,\delta).
\end{equation}

We will consider separately the cases in the statement (according to
$\epsilon,$ $c,$ and the rotational type of the hypersurface) to show that,
in any of them, there exists a pair $(s_0,y_0)$ such that $\xi(s)$ (as given in \eqref{eq-xi})
is well defined in  an open interval $I_0\owns s_0$\,, and  $\Delta(s_0,y_0)>0.$
The result, then, will follow
from Proposition \ref{prop-solutionsystem}.

\vtt
{\bf Case 1:} \underline{$\epsilon=-1$, $c\in(-\infty,+\infty)$, spherical-type}
\vtt

In this setting,
$f(x)=\sinh x$ and $\psi(s)$ is as in the  first three lines
of Table \ref{table-psi}, according to the sign of $c.$ Then, we have
\begin{equation} \label{eq-xi007}
\xi(s)=
\left\{
\begin{array}{lcc}
\chi\left(\frac{\sin(\sqrt cs)}{\sqrt c\sinh(\phi(s))}\right) &\text{if}& c>0.\\[2ex]
\chi\left(\frac{s}{\sinh(\phi(s))}\right) &\text{if}& c=0.\\[2ex]
\chi\left(\frac{\sinh(\sqrt {-c}s)}{\sqrt {-c}\sinh(\phi(s))}\right) &\text{if}& c<0.
\end{array}
\right.
\end{equation}

In any case,  we  choose  $s_0$  positive and sufficiently small, and
$y_0=\phi(s_0)$ satisfying  $\sinh y_0>1$,  so that $\xi$ is well defined in an  open
interval $I_0\owns s_0$\,, $I_0\subset (0,+\infty).$

A straightforward calculation gives that, at $(s_0,y_0),$
the coefficients $a_i$ (defined in \eqref{eq-EDOphi})
satisfy $a_2(s_0,y_0)>0$ and
\[
a_0(s_0,y_0)=
\left\{
\begin{array}{lcc}
(\chi'(u_0))^2\frac{\cos^2(\sqrt cs_0)}{\sinh^2(y_0)}-1 &\text{if}& c>0.\\[2ex]
\frac{(\chi'(u_0))^2}{\sinh^2(y_0)}-1 &\text{if}& c=0.\\[2ex]
(\chi'(u_0))^2\frac{\cosh^2(\sqrt{-c}s_0)}{\sinh^2(y_0)}-1  &\text{if}& c<0,
\end{array}
\right.
\]
where $u_0$ is the argument of $\chi$ in \eqref{eq-xi007} for
$s=s_0$\,.
From the choice of $y_0$ and \eqref{eq-xiderivative}, we have  $a_0(s_0,y_0)<0$
in all three cases, which implies that  $\Delta(s_0,y_0)>0.$

\vtt
{\bf Case 2:} \underline{$\epsilon=0$, $c\in(-\infty,+\infty)$, spherical-type}
\vtt

The reasoning for this case is completely analogous to the one for Case 1.
We have just to replace, in that argument, the function $f(x)=\sinh x$ by the identity function $f(x)=x.$

\vtt
{\bf Case 3:} \underline{$\epsilon=1$, $c>0$, spherical-type}
\vtt

Now, the functions
$f$ and $\psi$ are
$f(x)=\cos x$ and
$\psi(s)=\sin(\sqrt cs)/\sqrt c,$
in which case equality  \eqref{eq-xi} takes the form
\[
\xi(s)=\chi\left(\frac{\sin(\sqrt cs)}{\sqrt c\cos\phi(s)}\right)\cdot
\]
Hence, choosing  $s_0$ and $y_0=\phi(s_0)$  positive and
sufficiently small, one has
\[
u_0:=\frac{\sin(\sqrt cs_0)}{\sqrt c\cos y_0}\in\omega(I),
\]
which implies that  $\xi(s)$ is well defined in  an
open interval $I_0\owns s_0$\,, $I_0\subset (0,+\infty).$
Also, a direct computation yields
$a_2(s_0,y_0)>0$ and
 \[a_0(s_0,y_0)=(\chi'(u_0))^2\frac{\cos^2(\sqrt cs_0)}{\cos^2(y_0)}-1.\]

Thus, assuming $y_0<\sqrt cs_0$ and considering \eqref{eq-xiderivative}, we conclude that
$a_0(s_0,y_0)<0.$ In particular, $\Delta(s_0,y_0)>0.$

\vtt
{\bf Case 4:} \underline{$\epsilon=-1$, $c\le 0$, parabolic-type}
\vtt

For this case, $f(x)=x$ and $\psi(s)$ is as in the fourth and fifth
lines of Table \ref{table-psi}, according to the sign of $c.$ Hence,
\begin{equation} \label{eq-xi008}
\xi(s)=
\left\{
\begin{array}{lcc}
\chi\left(\frac{e^{\sqrt{-c}s}}{\phi(s)}\right) &\text{if}& c<0.\\[2ex]
\chi\left(\frac{a}{\phi(s)}\right) &\text{if}& c=0,
\end{array}
\right.
\end{equation}
where $a$ is a positive constant. Setting $s_0=0,$ in any of the above cases, we
can choose a sufficiently large $y_0=\phi(0)$ such that $\xi$ is well defined
in a neighborhood of $0.$ For $c<0,$ we shall also assume $y_0>-c.$

Writing $u_0$ for the argument of $\chi$ in \eqref{eq-xi008},
we have  $a_2(s_0,y_0)>0$ and
\[
a_0(s_0,y_0)=
\left\{
\begin{array}{lcc}
\frac{-c(\chi'(u_0))^2}{y_0^4}-1 &\text{if}& c<0.\\[2ex]
-1 &\text{if}& c=0.
\end{array}
\right.
\]

So, considering \eqref{eq-xiderivative} (only for  $c<0$), we have
that $\Delta(s_0,y_0)>0$ in both cases.

\vtt

{\bf Case 5:} \underline{$\epsilon=-1$, $c<0$, hyperbolic-type}

\vtt
The functions $f$ and $\psi$ are $f(x)=\cosh x$ and $\psi(s)=\cosh s.$ Thus,
we have
\begin{equation} \label{eq-xifinal}
\xi(s)=\chi\left(\frac{\cosh(\sqrt{-c}s)}{\sqrt{-c}\cosh\phi(s)}\right).
\end{equation}

As in the previous case, by  choosing $s_0=0$ and $y_0=\phi(s_0)$ sufficiently large, we have
that $\xi$ is well defined in a neighborhood of $0.$ Again,
setting $u_0$ for the argument of $\xi$ in \eqref{eq-xifinal},
we have $a_2(s_0,y_0)>0$ and
\[
a_0(s_0,y_0)=\frac{(\chi'(u_0)\sinh(\sqrt{-c}s_0))^2}{\cosh^4(y_0)}-1=-1<0,
\]
which implies that $\Delta(s_0,y_0)>0.$
\end{proof}

Besides having totally umbilical vertical sections,
a notable fact of rotational hypersurfaces in $I\times\q$
is that they have the $T$-property (see \cite{dillenetal}).
Let us see that the same is true for hypersurfaces of
warped products $\wi.$
To that end, we first consider the diffeomorphism $G:I\rightarrow J=G(I)$ such that
$G'=1/\omega.$ As can be easily checked, the map
\begin{equation} \label{eq-varphi}
\begin{array}{cccc}
\varphi\colon & \wi   & \rightarrow & J\times\Q_\epsilon^n\\[1ex]
              & (t,p) & \mapsto     & (G(t),p)
\end{array}
\end{equation}
is a conformal diffeomorphism.  This, together with \cite[Lemma 3]{manfio-tojeiro-veken}, gives that a hypersurface
$\Sigma\subset\wi$ has the $T$-property if and only if $\varphi(\Sigma)$
has the $T$-property as a hypersurface of $J\times\Q_\epsilon^n$.
Also, since  $\varphi$ fixes the second factor $\Q_\epsilon^n$ pointwise,
a vertical section $\Sigma_t\subset\Sigma$ is totally umbilical
in $(\{t\}\times\Q_\epsilon^n,ds_\epsilon^2)$ if and only if
$\varphi(\Sigma_t)\subset\varphi(\Sigma)$ is totally umbilical
in $((\{G(t)\}\times\Q_\epsilon^n,ds_\epsilon^2).$ Consequently,
$\Sigma$ is rotational if and only if $\varphi(\Sigma)$ is rotational.

The following result, which will be applied in the
proof of Theorem \ref{th-CSC},  follows directly from the above considerations,
the identity \eqref{eq-principalcurvatureslemma}, and
\cite[Theorem 2]{dillenetal} (see also \cite[Remark 7.4]{manfio-tojeiro}).

\begin{proposition} \label{prop-TandRotational}
For $n\ge 3,$ let $\Sigma\in\wi$  be a hypersurface having the $T$-property.
Then, $\Sigma$ is rotational if and only if
any vertical section $\Sigma_t$ of $\Sigma$ is  totally
umbilical in $(\{t\}\times\Q_\epsilon^n,ds_\epsilon^2).$
\end{proposition}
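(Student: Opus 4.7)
The strategy is to reduce the equivalence to the corresponding statement in the direct (non-warped) product $J\times\Q_\epsilon^n$, where it becomes an instance of the classical characterization in \cite[Theorem 2]{dillenetal}. The bridge is the conformal diffeomorphism $\varphi$ defined in \eqref{eq-varphi}.

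I would start by recording, as in the discussion preceding the proposition, that $\varphi$ simultaneously preserves (i) the $T$-property (by \cite[Lemma 3]{manfio-tojeiro-veken}), (ii) rotationality, and (iii) each vertical section together with its shape operator in the slice $\Q_\epsilon^n$, since $\varphi$ acts as the identity on the second factor. In particular, the vertical sections of $\varphi(\Sigma)$ are totally umbilical if and only if those of $\Sigma$ are. So it suffices to prove the equivalence for hypersurfaces of $J\times\Q_\epsilon^n$ with the $T$-property.

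The direction ``rotational $\Rightarrow$ totally umbilical vertical sections'' is immediate from the definition recalled at the beginning of Section \ref{sec-csc}: in the direct product, the horizontal projection is a slice-wise isometry, so a parallel umbilical family of $\q$ pulls back to umbilical vertical sections. For the converse, I would apply \eqref{eq-principalcurvatureslemma} in the direct-product setting (where $\omega\equiv 1$, $\omega'\equiv 0$): since $T$ is a principal direction and $\Sigma_t$ is totally umbilical, one has $\lambda_2^t=\cdots=\lambda_n^t$, and \eqref{eq-principalcurvatureslemma} forces $\lambda_2=\cdots=\lambda_n$ along each section. Hence $\varphi(\Sigma)$ has the $T$-property with $T$ a simple principal direction and at most two distinct principal curvatures everywhere, which are precisely the hypotheses of \cite[Theorem 2]{dillenetal} (as reformulated in \cite[Remark 7.4]{manfio-tojeiro}); that theorem then yields that $\varphi(\Sigma)$ is rotational.

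The main obstacle is verifying the preservation properties of $\varphi$ cleanly, particularly that rotationality in the warped product $\wi$ (defined via a parallel umbilical family in $\q$) corresponds exactly to rotationality of $\varphi(\Sigma)$ in $J\times\Q_\epsilon^n$; this hinges on $\varphi$ being fiber-preserving and an isometry on each fiber $\{t\}\times\Q_\epsilon^n$ up to a conformal factor depending only on $t$, so that the notion of ``parallel family of totally umbilical leaves in $\Q_\epsilon^n$'' is invariant under $\varphi$. Once this translation is set up, the hypothesis $n\ge 3$ is precisely what allows the cited classification to upgrade the pointwise umbilical-section data into the parallel family of leaves required by the definition of a rotational hypersurface.
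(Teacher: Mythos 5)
Your proposal is correct and follows essentially the same route as the paper: reduce to the unwarped product $J\times\Q_\epsilon^n$ via the conformal diffeomorphism $\varphi$ of \eqref{eq-varphi}, note that $\varphi$ preserves the $T$-property, rotationality, and umbilicity of vertical sections, and then invoke \eqref{eq-principalcurvatureslemma} together with \cite[Theorem 2]{dillenetal} (cf. \cite[Remark 7.4]{manfio-tojeiro}). The paper states the proposition as following "directly from the above considerations" and those same two citations, so your write-up is just a more explicit version of the intended argument.
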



\begin{proof}[Proof of Theorem \ref{th-CSC}]
As $\Sigma$ has constant sectional curvature, it is an Einstein manifold. Thus, by Lemma \ref{lem-T},
$T$ is a principal direction of $\Sigma,$ so that we can set
\[
\{X_1\,, \dots, X_n\}\subset T\Sigma
\]
for the orthonormal frame of principal directions of $\Sigma$ with $X_1=T/\|T\|.$

Writing  $c$ for the sectional curvature of $\Sigma,$
$\lambda_1$ for its principal curvature corresponding to $T,$  and
$\lambda_2,\dots,\lambda_n$ for the principal curvatures corresponding to the
vertical principal directions of $\Sigma,$
we get from Gauss equation \eqref{eq-gauss02} that, for all $2\le i,j\le n,$
$i\ne j,$ the following identities hold:
\begin{equation}\label{eq-gaussproof}
  \begin{aligned}
    (\alpha\circ\xi) +c &= \lambda_i\lambda_j\\
    (\alpha\circ\xi)+c  &=\lambda_1\lambda_i-(\beta\circ\xi)\|T\|^2.
  \end{aligned}
\end{equation}

Suppose that, for some $x\in\Sigma,$ $\alpha(\xi(x))+c=0.$ Then, from the first identity in \eqref{eq-gaussproof},
$\lambda_i(x)$ is nonzero for at most one index $i\in\{2,\dots ,n\}.$ Thus, choosing $i\in\{2,\dots ,n\}$ such that $\lambda_i(x)=0$,
we have from the second identity in \eqref{eq-gaussproof} that  $(\beta\circ\xi)\|T\|$  vanishes at $x,$
which is contrary to our hypothesis. Hence,
$(\alpha\circ\xi) +c$ never vanishes on $\Sigma.$

Since we are assuming  $n>3,$ the first identity in \eqref{eq-gaussproof}  implies that
the last $n-1$ principal curvatures of $\Sigma$ are all equal and nonzero.
This, together with
\eqref{eq-principalcurvatureslemma}, gives that any vertical section $\Sigma_t$ of $\Sigma$ is totally umbilical
in $(\{t\}\times\Q_\epsilon^n,ds_\epsilon^2).$
The result, then, follows from Proposition \ref{prop-TandRotational}.
\end{proof}

\section{Proofs of Theorems \ref{th-betazero} and \ref{th-einstein}} \label{sec-einstein}

\begin{proof}[Proof of Theorem \ref{th-betazero}]
The proof follows closely the one given for \cite[Theorem 3.1]{ryan}.

Considering equality \eqref{eq-alphaprime} and the fact that $T=\nabla\xi,$ it follows
from the hypothesis on $(\beta\circ\xi)T$ that
the derivative of $\alpha\circ\xi$ vanishes on $\Sigma.$ Indeed, given $x\in\Sigma,$
one has
$$
(\alpha\circ\xi)_*(x)=\alpha'(\xi(x))\xi_*(x)=\frac{2\omega'(\xi(x))}{\omega(\xi(x))}\beta(\xi(x))\xi_*(x)=0.
$$
In particular,  $\alpha\circ\xi$ (to be denoted by $\alpha$) is  constant, since  $\Sigma$ is connected.

Let $\{X_1\,,\dots, X_n\}\subset T\Sigma$ be a local  orthonormal frame of principal directions of
$\Sigma.$  It follows from $\eqref{eq-barcurvaturetensor}$ that
$\langle\overbar R(X_i\,, X_k)X_k\,, X_i\rangle=-\alpha\,\, \forall i\ne k\in\{1,\dots ,n\}.$ This  equality, together
with \eqref{eq-ricci02}, gives that any principal direction $\lambda_i$ of
$\Sigma$ is a root of the following quadratic equation:
\begin{equation}\label{eq-quadratic}
s^2-Hs+\sigma=0.
\end{equation}
In particular, there are only two possible values for
each $\lambda_i.$ Hence, we can assume that, for some $k\in \{1,\dots ,n\},$ one has
$$\lambda_1=\cdots =\lambda_k=\lambda \quad\text{and}\quad \lambda_{k+1}=\cdots =\lambda_{n}=\mu,$$
with $\lambda$ and $\mu$ satisfying the identities:
\begin{equation} \label{eq-lm}
\lambda+\mu=H \quad \text{and} \quad \lambda\mu=\sigma.
\end{equation}
Since $H={\rm trace}\, A=k\lambda+(n-k)\mu,$ it follows that
\begin{equation} \label{eq-k}
(k-1)\lambda+(n-k-1)\mu=0.
\end{equation}

Assume  $\sigma>0.$ In this case,  if $\lambda$ and $\mu$ are distinct at some point of $\Sigma$,
they must have the same sign. Thus,
by equality \eqref{eq-k}, we must have $k-1=n-k-1=0,$ which gives $n=2.$ However, we are assuming $n>2.$
Hence, $\Sigma$ is totally umbilical if
$\sigma>0.$ In addition, $\lambda^2=\sigma,$ i.e., $\lambda$ is constant on $\Sigma.$
But, by Gauss equation \eqref{eq-gauss02},
for any pair of orthonormal principal directions $X_i\,, X_j$\,, one has
$K(X_i,X_j)=\lambda^2-\alpha,$
which implies that $\Sigma$ has constant sectional curvature $c=\lambda^2-\alpha.$

Suppose now that $\sigma=0.$ If $\lambda\ne 0$ and $\mu=0,$ we have $H=k\lambda.$ However,
$\lambda$ is a solution of  \eqref{eq-quadratic}. Hence,
$\lambda^2-k\lambda^2=0,$ that is, $k=1,$ which implies
that $\Sigma$ has at most one nonzero principal curvature. This,
together with Gauss equation, gives that $\Sigma$ has
constant sectional curvature $c=-\alpha.$

Finally, assume that $\sigma=\lambda\mu<0,$ in which case $\lambda$ and $\mu$ are distinct and nonzero.
Then, considering \eqref{eq-k}, we conclude that $1<k<n-1,$ that is,
both $\lambda$ and $\mu$ have  multiplicity at least $2.$
In addition, since $H$ is differentiable on $\Sigma$, as distinct roots of
\eqref{eq-quadratic},  $\lambda$ and $\mu$ are also differentiable. This, together
with \eqref{eq-k}, gives that $k$ is differentiable, and so is constant on $\Sigma,$
that is, $\lambda$ and $\mu$ have constant multiplicities.

It  follows from the above considerations and the identities \eqref{eq-lm} and \eqref{eq-k}
that $\lambda$ and $\mu$ are  constant functions on $\Sigma.$
Now, choosing orthonormal pairs $X_1, \, X_2$  and $Y_1\,, Y_2$  satisfying
$AX_i=\lambda X_i$  and $AY_i=\mu Y_i$\,, we get from  Gauss equation that
$$K(X_1,X_2)=\lambda^2-\alpha\ne\mu^2-\alpha=K(Y_1\,,Y_2),$$
which implies that $\Sigma$ has nonconstant sectional curvature.
\end{proof}

\begin{remark} \label{rem-betaT=0}
It is a well known fact that, for any $\delta\in\{-1,0,1\},$
$\Q_\delta^{n+1}$ contains open dense subsets which can
be represented as  warped products $\wi$ (see, e.g., \cite[Section 3.2]{manfio-tojeiro-veken}).
In all these representations, the corresponding function $\alpha$ is constant, so that Theorem \ref{th-betazero}
applies. More precisely, the cases (i) and (ii) occur for any $\delta\in\{-1,0,1\},$   whereas case (iii) occurs
only for $\delta=1.$ Namely, for the set
$\s^{n+1}-\{e_{n+2}\,,-e_{n+2}\}=(0,\pi)\times_{\sin t}\s^n$ (cf. \cite[Theorem 3.1]{ryan}).
\end{remark}

Let us recall that a Riemannian manifold $\overbar M^{n+1}$ is called \emph{conformally flat} if it is
locally conformal to  Euclidean space. Riemannian manifolds
of constant sectional curvature are known to be conformally flat. Conversely,
any conformally flat Einstein manifold has constant sectional curvature.
Another classical result on this subject states that, for $n\ge 4,$  any hypersurface of a
conformally flat manifold $\overbar M^{n+1}$ is also conformally flat if and only if
it has a principal curvature of multiplicity $n-1$ (cf. \cite[chap. 16]{dajczer-tojeiro}).
These facts will be considered in the proof below.

\begin{proof}[Proof of Theorem \ref{th-einstein}]
By Lemma \ref{lem-T},  $\Sigma$ has the $T$-property. Hence,
by Proposition \ref{prop-TandRotational}, the assertions (ii) and (iii) are equivalent. In addition, by
Theorem \ref{th-CSC}, (i) implies (iii). So, it remains to prove that (ii) implies (i).
To that end, let us observe that, from the second equality in \eqref{T-einstein-lambda1},
any of the principal curvatures $\lambda_i$\,, $i\in\{2,\dots ,n\},$
of $\Sigma$ is a root of the
quadratic equation $s^2-Hs+\sigma=0,$ where
\begin{equation} \label{eq-sigma}
\sigma=\Lambda+(n-1)(\alpha\circ\xi)+\|T\|^2(\beta\circ\xi).
\end{equation}

Therefore, at each point of $\Sigma,$ there are at most two
different values for such principal curvatures, that is,
there exists $k\in\{2,\dots ,n\}$ such that
$$\lambda_2=\cdots =\lambda_k=\lambda \quad\text{and}\quad \lambda_{k+1}=\cdots =\lambda_{n}=\mu.$$

Consider a vertical
section $\Sigma_t\subset\Sigma,$ $t\in I.$
It follows from Lemma \ref{lem-verticalsection} that
$X_2\,, \dots , X_n$ are  principal directions of $\Sigma_t$ whose principal curvatures are
\begin{equation} \label{eq-lambdatmut}
\lambda_t:=-\frac{\omega(t)}{\|T\|}\left(\lambda+\frac{\theta\omega'(t)}{\omega(t)}\right) \quad\text{and}\quad
\mu_t:=-\frac{\omega(t)}{\|T\|}\left(\mu+\frac{\theta\omega'(t)}{\omega(t)}\right).
\end{equation}
In particular, $\lambda_t=\mu_t$ if and only if $\lambda=\mu.$

Thus, if all vertical sections $\Sigma_t\subset\Sigma$ are totally umbilical,
then $\Sigma$ has  a principal curvature of multiplicity $n-1.$
Hence, $\Sigma$ is conformally flat,
since $I\times_\omega \Q_\epsilon^n$ is conformally flat (cf. \cite[Examples 16.2]{dajczer-tojeiro}).
Being  Einstein and conformally flat,
$\Sigma$ has constant sectional curvature, and so is trivial.
This shows that (ii) implies (i) and  finishes the proof of the theorem.
\end{proof}

\section{Einstein Graphs in $\wi$} \label{sec-graphs}

Consider an oriented isometric immersion
\[f:M_0^{n-1}\rightarrow\q,\]
of a Riemannian manifold $M_0$ into $\q.$
Suppose that there exists a neighborhood $\mathscr{U}$
of $M_0$ in $T^\perp M_0$  without focal points of $f,$ that is,
the restriction of the normal exponential map $\exp^\perp_{M_0}:T^\perp M_0\rightarrow M$ to
$\mathscr{U}$ is a diffeomorphism onto its image. Denoting by
$\eta$ the unit normal field  of $f,$   there is an open interval $I\owns 0$
such that, for all $p\in M_0,$ the curve
\begin{equation}\label{eq-geodesic}
\gamma_p(s)=\exp_{\scriptscriptstyle \q}(f(p),s\eta(p)), \, s\in I,
\end{equation}
is a well defined geodesic of $\q$ without conjugate points. Thus,
for all $s\in I,$
\[
\begin{array}{cccc}
f_s: & M_0 & \rightarrow & \q\\
     &  p       & \mapsto     & \gamma_p(s)
\end{array}
\]
is an immersion of $M_0$ into $\q,$ which is said to be \emph{parallel} to $f.$
Notice that, given $p\in M_0$, the tangent space $f_{s_*}(T_p M_0)$ of $f_s$
at $p$ is the parallel transport of $f_{*}(T_p M_0)$ along
$\gamma_p$ from $0$ to $s.$ We also remark that,  with the induced metric,
the unit normal  $\eta_s$  of $f_s$ at $p$ is given by
\[\eta_s(p)=\gamma_p'(s).\]

\begin{definition}
Let $\phi:I\rightarrow \phi(I)\subset\R$ be an increasing diffeomorphism, i.e., $\phi'>0.$
With the above notation, we call the set
\begin{equation}\label{eq-paralleldescription1}
\Sigma:=\{(\phi(s),f_s(p))\in \wi\,;\, p\in M_0, \, s\in I\},
\end{equation}
the \emph{graph} determined by $\phi$ and  $\{f_s\,;\, s\in I\}$ or $(\phi,f_s)$-\emph{graph}, for short.
\end{definition}

Let $\Sigma\subset I\times_\omega\Q_\epsilon^n$ be a $(\phi,f_s)$-graph
with the induced metric from $I\times_\omega\Q_\epsilon^n.$
For an arbitrary  point $x=(\phi(s), f_s(p))$ of $\Sigma,$ one has
\[T_x\Sigma=f_{s_*}(T_p M_0)\oplus {\rm Span}\,\{\partial_s\}, \,\,\, \partial_s=\eta_s+\phi'(s)\partial_t,\]
where $\eta_s$ denotes the unit normal field of $f_s$\,.

Writing, by abuse of notation,
$\omega\circ\phi=\omega,$  a  unit normal  to $\Sigma$ is
\begin{equation} \label{eq-normal}
N=\frac{-\phi'/\omega}{\sqrt{1+(\phi' / \omega)^2}}\dfrac{\eta_s}{\omega}+\frac{1}{\sqrt{1+(\phi' / \omega)^2}}\partial_t.
\end{equation}
In particular, the angle function of $\Sigma$ is
\begin{equation} \label{eq-thetaparallel}
\Theta=\frac{1}{\sqrt{1+(\phi' / \omega)^2}}\,\cdot
\end{equation}
So, if we set
\begin{equation}\label{eq-rho}
\rho:=\frac{\phi' / \omega}{\sqrt{1+(\phi' / \omega)^2}},
\end{equation}
we can write $N$ as
\begin{equation}\label{eq-normal2}
N=-\rho\dfrac{\eta_s}{\omega}+\theta\partial_t\,,
\end{equation}
which yields $1=\rho^2+\theta^2,$ i.e., $\rho=\|T\|$. It also follows from \eqref{eq-normal2} that
\[
\overbar\nabla_{\partial s}N=
-\left(\dfrac{\rho}{\omega}\right)'\eta_s-
\dfrac{\rho}{\omega}\overbar\nabla_{\partial s}\eta_s+\langle \nabla\theta,\partial_s\rangle\partial_t+\theta\overbar\nabla_{\partial_s}\partial_t\,.
\]
However, by the identities \eqref{eq-connectionwarped}, setting $\zeta=\omega'/\omega$ and, again by abuse of notation, $\zeta\circ\phi=\zeta,$ we have that
$$
\begin{array}{rcl}
\overbar\nabla_{\partial s}\eta_s &=& \overbar{\nabla}_{\eta_s} \eta_s + \phi' \overbar{\nabla}_{\partial_t} \eta_s \\
&=& \widetilde\nabla_{\eta_s}\eta_s - \zeta \langle \eta_s, \eta_s \rangle \partial_t + \phi' \zeta \eta_s \\
&=& - \zeta \omega^2 \partial_t + \phi' \zeta \eta_s.
\end{array}
$$
and  $\overbar\nabla_{\partial_s}\partial_t=\overbar\nabla_{\eta_s}\partial_t=\zeta\eta_s$\,. Therefore,
\[
A\partial_s=-\overbar\nabla_{\partial s}N=-(\rho\zeta\omega+\langle\nabla\theta,\partial_s\rangle)\partial_t+\left(\dfrac{\rho'}{\omega}-\theta\zeta\right)\eta_s\,,
\]
which implies that $A\partial_s$ is orthogonal to any vertical field $X\in\{\partial_s\}^\perp\subset T\Sigma$. Thus,
$\partial_s$ is a principal direction of $\Sigma$ with eigenvalue
\[
\lambda_1=\frac{\rho'}{\omega}-\theta\zeta.
\]
In particular, $\lambda_1$ is a function of $s$ alone.

From \eqref{eq-thetaparallel}, we have that $0<\theta <1.$
Thus, $T$ is non vanishing on $\Sigma$ and parallel to $\partial_s$\,,
which implies that $T$ is also a principal direction of
$\Sigma$ with eigenvalue $\lambda_1.$

Now, let $X_2\,, \dots ,X_n$ be orthonormal vertical principal directions of $\Sigma.$ By Lemma \ref{lem-verticalsection},
these fields  are also principal directions of the vertical sections $\Sigma_s=\Sigma_{\phi(s)}$
with  corresponding principal curvatures
\begin{equation} \label{eq-lambdais}
\lambda_i^s=-\frac{\omega}{\|T\|}(\lambda_i+\theta\zeta)=-\frac{\omega}{\rho}(\lambda_i+\theta\zeta), \,\, \,\, i=2,\dots ,n.
\end{equation}

Summarizing,
we have the following result.
\begin{proposition} \label{prop-graph}
Let $\Sigma$ be a $(\phi,f_s)$-graph in $\wi$ with unit normal
$N$ (with respect to the induced metric) as in \eqref{eq-normal}.
Then, $\Sigma$ has the $T$-property, and
its principal curvatures are given by
\begin{equation}
\lambda_1=\dfrac{\rho'(s)}{\omega(\phi(s))}-\theta\zeta(\phi(s)), \quad
\lambda_i=-\dfrac{\rho(s)}{\omega(\phi(s))}\lambda_i^s(p)-\theta\zeta(\phi(s)), \,\, \,\, i=2,\dots ,n, \label{lambdas-sum}
\end{equation}
where $\theta$ is as in \eqref{eq-thetaparallel}, $\zeta=(\omega'\circ\phi)/(\omega\circ\phi),$
and $\lambda_i^s(p)$ is the $i$-th principal curvature of the parallel $f_s:M_0\rightarrow\q$ at $p\in M_0.$
\end{proposition}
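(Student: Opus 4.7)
The proposition extracts three structural facts from the parametrization of $\Sigma$, so the plan is to carry them out in order: first identify the normal and the $T$-direction, then show that $T$ is a principal direction and compute $\lambda_1$, and finally derive the remaining $\lambda_i$ from Lemma \ref{lem-verticalsection}.

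For the first step, I would use that $T_x\Sigma = f_{s*}(T_pM_0) \oplus \mathrm{Span}\{\partial_s\}$ with $\partial_s = \eta_s + \phi'(s)\partial_t$, and observe from \eqref{eq-normal2} that $N$ lies in the $(\eta_s,\partial_t)$-plane. Hence $T = \partial_t - \theta N$ also lies in that plane, and the identity $\rho^2 + \theta^2 = 1$ gives $\|T\| = \rho$. Since $\partial_s$ spans the same plane, $T$ is automatically proportional to $\partial_s$, so the $T$-property reduces to showing that $\partial_s$ is a principal direction.

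For the second step, I would compute $A\partial_s = -\overbar\nabla_{\partial_s}N$ directly. Writing $\partial_s = \eta_s + \phi'\partial_t$ and applying the warped-product Koszul identities \eqref{eq-connectionwarped}---noting that $\widetilde\nabla_{\eta_s}\eta_s = 0$, since $\eta_s$ is the velocity of the normal geodesic $\gamma_p$---one gets $\overbar\nabla_{\partial_s}\eta_s = -\zeta\omega^2\partial_t + \phi'\zeta\eta_s$ and $\overbar\nabla_{\partial_s}\partial_t = \zeta\eta_s$. Substituting these into the derivative of \eqref{eq-normal2} and grouping terms shows that $A\partial_s$ has no component along $f_{s*}(T_pM_0)$, so by tangency it is forced to be a multiple of $\partial_s$; reading off the $\eta_s$-coefficient and simplifying gives $\lambda_1 = \rho'/\omega - \theta\zeta$.

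For the third step, the vertical section $\Sigma_{\phi(s)}$ coincides, as a hypersurface of $(\{\phi(s)\}\times\Q_\epsilon^n, ds_\epsilon^2)$, with the image of $f_s$; comparing \eqref{eq-Nt} against \eqref{eq-normal2} shows that the induced unit normal $N_{\phi(s)}$ agrees with $\eta_s$, so that the $\lambda_i^s(p)$ in the statement are exactly the principal curvatures of $\Sigma_{\phi(s)}$ in the sense of Lemma \ref{lem-verticalsection}. With the $T$-property already in hand, substituting $\|T\| = \rho$ and $\omega'/\omega = \zeta$ into \eqref{eq-principalcurvatureslemma} produces $\lambda_i = -(\rho/\omega)\lambda_i^s - \theta\zeta$. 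The only mildly delicate point is this orientation reconciliation: one must confirm the sign conventions on $N$, $N_t$, and $\eta_s$ match before the formula of Lemma \ref{lem-verticalsection} can be invoked verbatim, but this is a direct algebraic check.
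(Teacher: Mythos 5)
Your proposal is correct and follows essentially the same route as the paper: the same computation of $A\partial_s=-\overbar\nabla_{\partial_s}N$ via the warped-product connection identities (with $\widetilde\nabla_{\eta_s}\eta_s=0$ along the normal geodesic) to get $\lambda_1=\rho'/\omega-\theta\zeta$, followed by Lemma \ref{lem-verticalsection} with $\|T\|=\rho$ for the vertical curvatures. Your extra remark on reconciling $N_t$ with $\eta_s$ is a sound check (indeed $N_t=\eta_s$ here), which the paper leaves implicit.
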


It is well-known that the principal curvatures $\lambda_i^s(p)$ in the  statement
of Proposition \ref{prop-graph} are given by (see \cite{cecil, dominguez-vazquez}):
\begin{equation}\label{eq-lambdais007}
\lambda_i^s(p) = \dfrac{\epsilon S_{\epsilon}(s) + C_{\epsilon}(s)\lambda_i^0(p)}{C_{\epsilon}(s)-S_{\epsilon}(s)\lambda_i^0(p)},
\end{equation}
where $\lambda_i^0$ is the $i$-th principal curvature of $f=f_0,$ and
$C_\epsilon, S_\epsilon$ are as in Table \ref{table-trigfunctions}.
\begin{table}[thb]%
\centering %
\begin{tabular}{cccc}
\toprule %
   {{\small\rm Function}}                & $\epsilon=0$ & $\epsilon=1$   & $\epsilon=-1$ \\\otoprule %
$C_\epsilon (s)$    & $1$          & $\cos s$        & $\cosh s$     \\\midrule
$S_\epsilon (s)$    & $s$          & $\sin s$         & $\sinh s$   \\\bottomrule
\end{tabular}
\vtt
\caption{Definition of $C_\epsilon$ and $S_\epsilon$}
\label{table-trigfunctions}
\end{table}

In particular, the following elementary equalities hold:
\begin{equation}\label{relations-c-s}
  \begin{aligned}
   C_{\epsilon}'(s)&=-\epsilon S_{\epsilon}(s)\\
    S_{\epsilon}'(s)&=C_{\epsilon}(s)\\
    C_{\epsilon}^2(s)+\epsilon S^2_{\epsilon}(s) &= 1.
  \end{aligned}
\end{equation}

From identities \eqref{lambdas-sum}, \eqref{eq-lambdais007} and \eqref{relations-c-s}, we have
\begin{eqnarray}
\lambda_1 = - \dfrac{\theta}{\phi'} \dfrac{d}{ds} \log (\theta \omega), \label{lemma-lambda1} \\
\lambda_i^s(p) = - \dfrac{d}{ds} \log \left(C_{\epsilon}(s) - S_{\epsilon}(s) \lambda_i^0(p)\right), \label{lambda-i-int} \\
\dfrac{d}{ds} \lambda_i^s(p) = \epsilon + (\lambda_i^s(p))^2. \label{lambda-i-dev}
\end{eqnarray}

Let $f:M_0\rightarrow\q$ be a hypersurface, and let
$$\mathscr F=\{f_s:M_0\rightarrow\q\,;\, s\in I\owns 0\}$$
be a family of parallel hypersurfaces to $f=f_0.$ We say that $f$ is \emph{isoparametric} if any hypersurface $f_s$ has constant mean curvature.
It was proved by Cartan that  $f$ is isoparametric if and only if any $f_s$ has constant principal curvatures
$\lambda_i^s$ (possibly depending on $i$ and $s$).

Suppose that, for an integer $d>1,$  $\lambda_1, \dots ,\lambda_{d}$
are  pairwise distinct principal curvatures of an isoparametric
hypersurface $f:M_0\rightarrow\q.$ The following formula,
also due to Cartan, holds  for any fixed $i\in\{1,\dots, d\}$ (cf. \cite{cecil, dominguez-vazquez}):
\[
\sum_{i\ne j}n_j\frac{\epsilon+\lambda_i\lambda_j}{\lambda_i-\lambda_j}=0,
\]
where $n_j$ denotes the multiplicity of $\lambda_j.$

If $n>2$ and $f$ has only two distinct principal curvatures, say $\lambda$ and $\mu,$ we have
from equality \eqref{eq-lambdais007} that  the same is true for each parallel $f_s.$ Denoting their
corresponding principal curvatures by $\lambda^s$ and $\mu^s,$
Cartan's formula yields
\begin{equation}\label{eq-cartan}
  \lambda^s\mu^s=-\epsilon \,\,\, \forall s\in I.
\end{equation}

\section{Proofs of Theorems \ref{L1-grapheinstein}--\ref{th-final}} \label{sec-last}

\begin{proof}[Proof of Theorem \ref{L1-grapheinstein}]
By Lemma \ref{lem-T}, $\Sigma$ has the $T$-property. So, the same is true for
$\varphi(\Sigma)\subset J\times\Qe,$
where $\varphi$ is the map given in  \eqref{eq-varphi}. Also, since $\varphi$ is a conformal diffeomorphism,
the angle function of $\varphi(\Sigma)$ is non vanishing in $J\times\Qe.$
Hence, by \cite[Theorem 1]{tojeiro}, $\varphi(\Sigma)$ is given locally by a
$(\tilde\phi,f_s)$-graph, which implies that $\Sigma$ is locally a
$(\phi,f_s)$-graph,
where $\phi=G^{-1}\circ\tilde\phi$, and $G$ is the diffeomorphism
$G:I\rightarrow J$ such that $G'=1/\omega.$ This proves (i).
(By abuse of notation, we shall  call $\Sigma$ such a local $(\phi,f_s)$-graph.)

Let  $\{X_1\,,\dots ,X_n\}$ be an orthonormal basis of principal directions of
$\Sigma$ such that $X_1=T/\|T\|.$
Since $\rho=||T||$, equations \eqref{T-einstein-lambda1} become
\begin{equation}\label{T-einstein-lambda007}
  \begin{aligned}
    \lambda_1^2 - H \lambda_1 + (n-1)(\beta \rho^2 + \alpha) + \Lambda &= 0\\[1ex]
    \lambda_i^2 - H \lambda_i + \beta \rho^2 + (n-1) \alpha + \Lambda &=0,
  \end{aligned}
\end{equation}
where, by abuse the notation, we
write $\alpha\circ\xi=\alpha$ and $\beta\circ\xi=\beta.$

The second of the above equations  gives that $\Sigma$ has at most two distinct
principal curvatures among the last $n-1$. Let us call them $\lambda$ and $\mu$.
So, one has
\begin{equation}\label{system-lambda-mu}
  \begin{aligned}
 \lambda + \mu &= H  \\[1ex]
\lambda \mu &= (n-1)(\alpha + \beta \rho^2) + \Lambda.
\end{aligned}
\end{equation}

Let us assume first that $\lambda_1$
vanishes in an open interval $I_0 \subset I$. Denoting the multiplicity of
$\lambda$ and $\mu$ by $n_{\lambda}$ and $n_{\mu}$, respectively,
it follows from the first equation in \eqref{T-einstein-lambda007}
that equations \eqref{system-lambda-mu} can be  rewritten as
\begin{equation}
\begin{array}{rcl}
(1-n_{\lambda}) \lambda + (1-n_{\mu}) \mu &=& 0  \\
\lambda \mu &=& -(n-2) \beta \rho^2. \label{system-lambda-mu-rewritten}
\end{array}
\end{equation}

Since $n>2$ and $\beta ||T||$ never vanishes,
it follows from equations \eqref{system-lambda-mu-rewritten}
that  $\beta>0$, and that $\lambda$ and $\mu$ depend only on $s.$
Since $\lambda_1$ also depends only on $s,$
we conclude that $f_s$ is isoparametric.

By continuity, we can assume now that  $\lambda_1$ never vanishes.
Since $\lambda_1$ and $\beta \rho^2 + \alpha$ depend only on $s,$ it follows
from the first equality in \eqref{T-einstein-lambda007} that
the same is true for $H.$ This,
together with \eqref{eq-lambdais}, gives that the mean curvature $H_s$
of $f_s$ is a function of $s$ alone, which implies that the family
$\{f_s\}$ is isoparametric. In addition,  identity \eqref{eq-lambdais007} gives that
$f_s,$ just as $f=f_0,$ has at most two distinct principal curvatures, which we shall
denote by $\lambda^s$ and $\mu^s.$ This finishes the proof of (ii).

To prove (iii), let us suppose by contradiction that $\lambda\neq\mu$ and, consequently,
that $\lambda^s\neq\mu^s$. Since we are assuming $\lambda_1\equiv0$, it follows from \eqref{lambdas-sum} that
\begin{equation}
\begin{array}{rcl}
\theta \zeta &=& \dfrac{\rho'}{\omega} \\[1ex]
\lambda &=& - \dfrac{\rho \lambda^s+\rho'}{\omega} \\[1ex]
\mu &=& - \dfrac{\rho \mu^s+\rho'}{\omega}\,\cdot
\end{array} \label{lambda-mu-particular}
\end{equation}
In particular, equations \eqref{system-lambda-mu-rewritten} can be written as
\begin{equation} \label{system-lambda-mu-v2}
\begin{aligned}
(n_{\lambda}-1) \lambda^s + (n_{\mu}-1) \mu^s + (n-3) \dfrac{\rho'}{\rho} &= 0  \\[1ex]
\left( \dfrac{\rho'}{\rho} \right)^2 + (\lambda^s+\mu^s) \dfrac{\rho'}{\rho} + (n-3)\epsilon + (n-2) \omega^2 \zeta' &= 0.
\end{aligned}
\end{equation}

Now, equation \eqref{lemma-lambda1} yields  $\theta\omega=\sqrt{c}$
for some positive constant $c$. Therefore, from the first equation in \eqref{lambda-mu-particular},
we have $\zeta={\rho'}/{\sqrt{c}}$. Moreover, from the expressions of $\Theta$ and $\rho$
in \eqref{eq-thetaparallel} and \eqref{eq-rho},
one has ${\rho}/{\theta} = {\phi'}/{\omega}$. Hence,
\[
\omega^2\zeta'=\frac{\omega^2 \rho''}{\phi'\sqrt{c}}=\frac{\omega \rho''}{\phi'\Theta}=\frac{\rho''}{\rho}\,,
\]
so that
\[
\omega^2 \zeta' = \left( \dfrac{\rho'}{\rho} \right)'+\left( \dfrac{\rho'}{\rho} \right)^2.
\]
From this and the second identity in \eqref{system-lambda-mu-v2}, we have
\begin{equation} \label{system-lambdalmu-v22}
(n-1) \left( \dfrac{\rho'}{\rho} \right)^2 + (\lambda^s+\mu^s) \dfrac{\rho'}{\rho} + (n-3)\epsilon + (n-2) \left( \dfrac{\rho'}{\rho} \right)' = 0.
\end{equation}
Using \eqref{lambda-i-dev} and the first equality in \eqref{system-lambda-mu-v2}, we also have
\begin{equation}
\left( \dfrac{\rho'}{\rho} \right)' = -\epsilon + \left(\dfrac{1-n_{\lambda}}{n-3} \right) (\lambda^s)^2 + \left(\dfrac{1-n_{\mu}}{n-3} \right) (\mu^s)^2. \label{rho-derivative}
\end{equation}

Finally, Cartan's formula  $\lambda^s \mu^s = -\epsilon$ and equalities
\eqref{system-lambda-mu-v2}--\eqref{rho-derivative} yield
$$
\dfrac{(n-1)(1-n_{\lambda})(1-n_{\mu})}{(n-3)^2} \left( (\lambda^s)^2 + (\mu^s)^2+2 \epsilon \right) = 0.
$$

Since, from \eqref{system-lambda-mu-rewritten}, we have $n_{\lambda} > 1$
and $n_{\mu}>1,$ the above equation  reduces to $(\lambda^s)^2 + (\mu^s)^2+2 \epsilon = 0.$
Again from $\lambda^s\mu^s=-\epsilon,$ we have $(\lambda^s-\mu^s)^2 = 0,$
which yields $\lambda^s\equiv\mu^s,$ so that $\lambda\equiv\mu.$
Therefore, all vertical sections of $\Sigma$ are totally umbilical. So,
Theorem \ref{th-einstein} applies and gives that $\Sigma$ is trivial.
This shows (iii) and finishes the proof of the theorem.
\end{proof}

\begin{proof}[Proof of Theorem \ref{Einstein-cylinder}]

Since $\theta<1$ is constant, we have that $T$ never
vanishes on $\Sigma$ and, from \eqref{eq-gradthetawarp},
that $T$ is a principal direction with associated principal curvature
\begin{equation}\label{eq-lambda1Thetacte}
\lambda_1 = -\theta\dfrac{\omega'}{\omega}\,\cdot
\end{equation}

Let us suppose that $\beta\circ\xi$ vanishes in a connected open set
$\Sigma'\subset\Sigma.$ Then, from Theorem \ref{th-betazero},
$\alpha\circ\xi$ is constant on $\Sigma'$, and  $\Sigma'$ has constant sectional curvature,
unless $\sigma:=\Lambda+(n-1)(\alpha\circ\xi)$ is negative. In this case,
Theorem \ref{th-betazero} also gives that all
principal curvatures of $\Sigma'$ are constant. Hence, from \eqref{eq-lambda1Thetacte},
$\omega'/\omega$ is constant on $\Sigma'.$ Since $\alpha\circ\xi$ is also constant, we have
from \eqref{eq-def-alpha-beta} that $\omega$ is constant on $\Sigma',$ which implies that
$\lambda_1$ vanishes there. However, from Lemma \ref{lem-T-einstein}, we have
\[
0=\lambda_1^2-H\lambda_1+\sigma=\sigma<0,
\]
which is clearly a contradiction.

From the above considerations and the connectedness of $\Sigma,$ we can assume
that $\beta\circ\xi$ never vanishes on $\Sigma,$ which makes it an ideal Einstein hypersurface.
Assuming also that $\theta>0,$ we have from Theorem \ref{L1-grapheinstein} that $\Sigma$ is locally a $(\phi, f_s)$-graph, where
$\{f_s\}$ is isoparametric.
Since $\theta$ is constant on $\Sigma,$ so is the $\rho$-function of
any such graph,
for $\theta^2+\rho^2=1.$ Thus,
from Theorem \ref{L1-grapheinstein}-(iii), $\Sigma$ is trivial.
This proves (i).


Let us suppose now that $\theta$ vanishes on $\Sigma.$ Then, from \eqref{eq-lambda1Thetacte},
$\lambda_1 \equiv 0$. In this case, we have from  \eqref{T-einstein-lambda1} that
\begin{equation} \label{einstein-theta1}
\begin{aligned}
(n-1)(\beta + \alpha) + \Lambda &= 0 \\[1ex]
\lambda_i^2 - H \lambda_i + \beta + (n-1) \alpha + \Lambda &=0,
\end{aligned}
\end{equation}
where, by abuse of notation, we write $\beta\circ\xi=\beta$ and $\alpha\circ\xi=\alpha.$

Considering the first of the above equations and
the definitions of $\alpha$ and $\beta$ given in \eqref{eq-def-alpha-beta},
we conclude that $\omega$ must satisfy the following ODE
$$
\dfrac{\omega''}{\omega} = -\dfrac{\Lambda}{n-1}\,\cdot
$$
A first integration then gives, for some constant $c,$
\begin{equation}
(\omega')^2+\dfrac{\Lambda}{n-1}\omega^2+c=0, \label{eq-edo-omega2}
\end{equation}
which, together with \eqref{eq-def-alpha-beta}, yields
\begin{equation} \label{eq-alpha-beta-omega010}
\alpha =-\dfrac{\Lambda}{n-1}-\dfrac{c+\epsilon}{\omega^2} \quad \textnormal{and} \quad  \beta=\dfrac{c+\epsilon}{\omega^2}\,\cdot
\end{equation}

If $c+\epsilon=0$, then $\beta=0$ and we are
in the position of Theorem \ref{th-betazero}-(ii), in which case
$\Sigma$ has constant sectional curvature
and its shape operator has rank at most $1.$

If   $c+\epsilon\neq 0,$ we have from \eqref{eq-alpha-beta-omega010} that
$\beta$ does not vanish identically on $I,$ so that $I\times_\omega\q$ has
nonconstant sectional curvature.
Also, since $\theta \equiv 0$, we have $T=\partial_t,$ which gives that $\Sigma$ is
a cylinder over a hypersurface $\Sigma_0$ of $\mathbb{Q}^n_{\epsilon}.$
Denoting by $\lambda_i^0$ the $i$-th
principal curvature of $\Sigma_0,$ we have from
equality \eqref{eq-principalcurvatureslemma}
in Lemma \ref{lem-verticalsection} that
$\lambda_i^0 = -\omega(t) \lambda_i.$ In this case,
the second equation in \eqref{einstein-theta1} becomes
\begin{equation} \label{eq-sigma0}
(\lambda_i^0)^2 -  H^0 \lambda_i^0 -(n-2)(c+\epsilon)=0,
\end{equation}
where $H^0$ is the mean curvature of $\Sigma_0.$
Therefore, at each point of $\Sigma_0,$
we have at most two distinct principal curvatures. Let us call them $\lambda^0$ and $\mu^0$.

Suppose $c+ \epsilon <0$. If $\lambda^0 \neq \mu^0$ at some point, then
\begin{equation}\label{eq-lambdamuzero}
\begin{array}{rcl}
(k-1) \lambda^0 + (n-k-2) \mu^0 &=& 0  \\
\lambda^0 \mu^0 &=& -(n-2) (c+\epsilon).
\end{array}
\end{equation}
Then $\lambda_0$ and $\mu_0$ do not vanish and have the same sign. The first equation of \eqref{eq-lambdamuzero}
implies that  $k-1=n-k-2=0$ and, therefore, $n=3$. Since we are supposing $n>3,$
we conclude that all points of $\Sigma_0$ are umbilical, which implies
that  $\Sigma$ has constant sectional curvature.

If $c + \epsilon>0,$
the roots of \eqref{eq-sigma0} are
necessarily distinct,  which implies that $\lambda^0 \neq \mu^0$ everywhere on $\Sigma_0.$
Moreover, from \eqref{eq-lambdamuzero},
$\lambda^0$ and $\mu^0$ are constant functions.
In this case, Cartan's identity $\lambda^0 \mu^0 = - \epsilon$ implies that
$\epsilon = 1$. Otherwise, again by the first equation of \eqref{eq-lambdamuzero},
we would have $n=3.$

Therefore, denoting by $\s^k(c)$ the $k$-dimensional sphere of constant sectional curvature $c,$ it follows from
\cite[Theorems 2.5,\,3.4]{ryan} that, for some $k\in\{1,\dots,n-2\},$
$\Sigma_0$ is congruent to the standard  Riemannian product $\s^k(c_1)\times\s^{n-1-k}(c_2),$ where
\begin{equation}\label{radi-cylinder}
c_1={\dfrac{n-3}{k-1}} \quad\textnormal{and}\quad c_2={\dfrac{n-3}{n-k-2}}\,\cdot
\end{equation}
This finishes our proof.
\end{proof}

\begin{remark} \label{rem-thetacte}
It follows from the proof of Theorem \ref{Einstein-cylinder} that, except for the case (ii)-(a),
the Einstein hypersurface $\Sigma$ in its statement is necessarily ideal.
\end{remark}

In the next proof, we shall consider the fact that
the isoparametric hypersurfaces of $\q$ having at most two distinct
principal curvatures are completely classified. They are
the totally umbilical hypersurfaces, in the case of a single principal curvature,
and the tubes around totally geodesic submanifolds,
in the case of two distinct principal curvatures. In Euclidian space, for instance,
they constitute hyperplanes, geodesic spheres, and generalized cylinders
$\s^{k}\times\R^{n-k-1},$ $k\in\{1,\dots, n-2\}$ (see \cite{cecil,dominguez-vazquez} for more details).

\begin{proof}[Proof of Theorem \ref{th-final}]
Keeping the notation of the proof of Theorem \ref{th-einstein}, we have that $\Sigma$ has at most
three distinct principal curvatures, $\lambda_1, \lambda$ and $\mu.$ Since $(\beta\circ\xi)T$ never vanishes on
$\Sigma,$ the principal curvature $\lambda_1$ cannot satisfy both equations in Lemma \ref{lem-T-einstein}. Thus,
$\lambda\ne\lambda_1\ne\mu,$ i.e., $\lambda_1$ has multiplicity one, so that
$\Sigma$ has at least two distinct principal curvatures.
Let us define the set
\[
C:=\{x\in\Sigma\,;\, \lambda(x)=\mu(x)\}.
\]

If $\Sigma=C,$ we are done. Analogously, if
$C=\emptyset,$ we have that
$\Sigma$ has three distinct principal curvatures everywhere. Also, since
$\{T\}^\perp$ is invariant by the shape operator $A$ of $\Sigma,$
and $\lambda$ and $\mu$ are the (distinct) eigenvalues of $A|_{\{T\}^\perp},$
we have from \cite[Proposition 2.2]{ryan}  that both $\lambda$ and $\mu$
have constant multiplicity.

Now, suppose that $C$ and $\Sigma-C$ are both nonempty and let
$x$ be a boundary point of $\Sigma-C.$ Since $C$ is closed, one has
$x\in C,$ so that the vertical section $\Sigma_t\owns x$ is umbilical at $x$
(by \eqref{eq-principalcurvatureslemma} in Lemma \ref{lem-verticalsection}).
Furthermore, there is no open set $\Sigma'$ of $\Sigma$ containing $x$
such that $\theta$ vanishes on $\Sigma'.$ Indeed, assuming otherwise,
we have that $\Sigma'$ correspond to
one of the cases (b) or (c)
of Theorem \ref{Einstein-cylinder}-(ii) (see Remark \ref{rem-thetacte}).
However, in any of them,
$\lambda=\mu$ or $\lambda\ne\mu$
everywhere on $\Sigma',$ which contradicts the fact that
$x$ is a boundary point of $\Sigma-C.$

It follows from the above considerations that, in any open set $\Sigma'$ of
$\Sigma$ containing $x,$  there exists
$y\in(\Sigma-C)\cap\Sigma'$ such that $\theta(y)\ne0.$
Then, from Theorem \ref{L1-grapheinstein}, there exists a local $(\phi,f_s)$-graph
$\Sigma(y)\owns y$ in $\Sigma-C$
whose parallel family $\{f_s\}$ is isoparametric in $\q.$ In this case,
each parallel $f_s$ has precisely  two distinct principal curvatures everywhere,
so that they are tubes around totally  geodesic submanifolds.

Now, we can choose  $y$ as above in  such a way that
$x$ is a boundary point of $\Sigma(y).$
Then, denoting by $\pi$ the vertical projection of $I\times\q$ over $\q,$
one has that an open neighborhood
$U\owns\pi(x)$ in $\pi(\Sigma_t)\subset\q$ is the limit set of a
family of parallel open subsets of tubes of the family $\{f_s\}.$ In this case,
it is easily seen that $U$ itself must be part of a
tube of $\{f_s\}$. However, $U$ is umbilical at $\pi(x),$ and tubes have no umbilical points.
This contradiction shows that either $C$ or $\Sigma-C$  is empty, proving the
first part of the theorem. As for the last part,  just apply  Theorem \ref{th-einstein}.
\end{proof}

\end{document}